\documentclass[a4paper,11pt]{article}
\usepackage[utf8x]{inputenc}
\PrerenderUnicode{é}
\usepackage[title]{appendix}
\usepackage{varioref}
\usepackage{etex}
\usepackage{geometry}
\usepackage{mathtools}
\usepackage{amsfonts,amsmath,amssymb}
\usepackage{xmpmulti}
\usepackage{multicol}
\usepackage{hyperref}
%opening
\title{$3n+1$ problem: a heuristic lower bound for the number of integers connected to 1 and less than $x$}

\author{
  Daudin, Jean-Jacques\footnote{jeanjacques.daudin@gmail.com} \\
  \texttt{Université Paris-Saclay,AgroParisTech,INRAE,UMR MIA-Paris,75005,Paris,France }
 % \and
 % Pierre, Laurent\\
 % \texttt{laurent.pierre@u-paris10.fr}
}

\setcounter{topnumber}{2}
\setcounter{totalnumber}{4}

\usepackage{amsthm}
\newtheorem{theorem}{Theorem}
\newtheorem{definition}{Definition}

\newtheorem{proposition}{Proposition}

\newtheorem*{summary}{Summary}

\begin{document}

\maketitle

\begin{summary}
This paper gives a heuristic lower bound for the number of integers connected to 1 and less than $x$, $\theta(x) > 0.9x,$ in the context of the $3x+1$ problem.   
\end{summary}

\section{Basic elements}
In the presentation of the book "The Ultimate Challenge: The 3x+1 Problem", \cite{Lagarias}, J.C. Lagarias write {\it The $3x+1$ problem, or Collatz problem, concerns the following seemingly innocent arithmetic procedure applied to integers: If an integer $x$ is odd then "multiply by three and add one", while if it is even then "divide by two". The $3x+1$ problem asks whether, starting from any positive integer, repeating this procedure over and over will eventually reach the number 1. Despite its simple appearance, this problem is unsolved.} We refer to this book and other papers from the same author for a review of the context and the references.

\subsection{Definitions}
Let $n \in \mathbb{N}$.
\subsubsection*{Direct algorithm}

$$ T(n)=\left\lbrace \begin{array}{ll}
3n+1 & if \quad n\equiv 1 \pmod2 \\
n/2 & if \quad n\equiv 0 \pmod2 \\
\end{array}
 \right.
$$
\subsubsection*{Inverse algorithm}
$$ U(n)=\left\lbrace 
2n \quad  and \quad \dfrac{n-1}{3} \quad   if \quad n\equiv 4 \pmod6 
\right\rbrace
$$

\subsubsection*{Conjecture "$3x+1$"}
$\forall n \in \mathbb{N}, \exists k \in \mathbb{N} :T^k(n)=1.$

\subsection{Restriction to odd integers}

\subsubsection*{ $f$ and  $h$} \label{def f et h}

If the "$3x+1$" conjecture is true for the odd integers it is also true for the even ones by definition of $T$. The expressions of $T$ and $U$ restricted to odd terms are the following with $n$ odd:
\begin{itemize}
\item $T$ becomes $f$: $f(n)=(3n+1)2^{-j(3n+1)}$ with $j(3n+1)$ the power of 2 in the prime factors decomposition of $3n+1$. $f$ is often called the "Syracuse function".
\item $U$ becomes $h$, see\cite{Colussi}:
$$ h(n)=\left\lbrace \begin{array}{ll}
\emptyset & {\rm if} \quad n\equiv 0 \pmod3 \\
{ \frac{n2^k-1}3, k=2,4,6...} & {\rm if} \quad n\equiv 1 \pmod3 \\
{ \frac{n2^k-1}3, k=1,3,5...} & {\rm if} \quad n\equiv 2 \pmod3 \\
\end{array}
 \right.
$$
\end{itemize}

\subsubsection*{Graph $g(n)$}

Let $(n_1,n_2)$ be odd integers. $n_1$ and $n_2$ are connected by an edge if $n_1=f(n_2)$ or $n_2=f(n_1)$. $g(n)$ is the subset of the odd integers connected to n. $g_b(n)$ is the subset of the odd integers connected to n by a chain containing exactly $b+1$ odd numbers (including $1$ and $n$).

\section{Properties of $g(1)$}

\subsection{Expression of $n \in g(1)$ as a sum of fractions}

\begin{proposition} \label{theoremFractions}
Let $n \in g(1). \; \exists  (b,a>u_1>u_2,...>u_b=0) \in \mathbb{N}^{b+2} :\;$
$$ n=\frac{2^a}{3^b}-\sum_{i=1,b}\frac{2^{u_i}}{3^{b-i+1}}.$$
\end{proposition}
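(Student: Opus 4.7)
The proof is by induction on $b$, the number of $h$-steps in a chain from $1$ to $n$, and amounts to unrolling the inverse Syracuse map along that chain.

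If $n \in g_b(1)$, then by definition there is a sequence of odd integers $1 = n_0, n_1, \ldots, n_b = n$ whose consecutive terms are linked by $f$. Orienting the chain away from $1$ so that $n_{i-1} = f(n_i)$ for every $i = 1, \ldots, b$, there exist positive integers $k_1, \ldots, k_b \geq 1$ with
$$3 n_i + 1 = 2^{k_i}\, n_{i-1}, \qquad \text{i.e.,} \qquad n_i = \frac{2^{k_i}\, n_{i-1} - 1}{3}.$$
The base case $b = 0$ is trivial: $n = 1 = 2^0/3^0$ with an empty sum and $a = 0$.

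For the inductive step, assume
$$n_{b-1} = \frac{2^{a'}}{3^{b-1}} - \sum_{i=1}^{b-1}\frac{2^{u'_i}}{3^{b-i}}, \qquad a' > u'_1 > \cdots > u'_{b-1} = 0.$$
Substituting this into $n_b = (2^{k_b} n_{b-1} - 1)/3$ and distributing the factor $2^{k_b}/3$ gives
$$n_b = \frac{2^{k_b + a'}}{3^b} - \sum_{i=1}^{b-1}\frac{2^{k_b + u'_i}}{3^{b-i+1}} - \frac{1}{3}.$$
Setting $a := k_b + a'$, $u_i := k_b + u'_i$ for $1 \leq i \leq b-1$, and $u_b := 0$ (so that the leftover $-1/3$ becomes the $i = b$ summand, since $2^{u_b}/3^{b-b+1} = 1/3$) realizes exactly the claimed form.

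The required strict chain $a > u_1 > \cdots > u_b = 0$ then follows for free: adding $k_b$ to each inequality in $a' > u'_1 > \cdots > u'_{b-1} = 0$ preserves strict order and yields $a > u_1 > \cdots > u_{b-1} = k_b$, after which $k_b \geq 1 > 0 = u_b$ closes the chain. There is no serious obstacle; the argument is essentially a telescoping computation. The only subtle point is to orient the chain correctly: since $g$ is defined as an undirected graph, one must argue that the edges of the path from $1$ to $n$ can be canonically oriented away from $1$, each edge then corresponding to an $h$-step. This reduces to the fact that $f$ is a well-defined function on odd integers, so iteration back toward $1$ via $f$ is unambiguous, which in turn fixes the exponents $k_i$ and forces $u_b = 0$.
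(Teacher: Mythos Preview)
Your proof is correct. The induction on $b$ that unrolls the relation $n_i=(2^{k_i}n_{i-1}-1)/3$ is exactly the natural argument, and your bookkeeping for the exponents is clean: shifting the old $u'_i$ by $k_b$ and appending $u_b=0$ gives the desired strictly decreasing chain, with $u_{b-1}=k_b\geq 1>0$ closing it.

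The paper itself does not supply a proof here; it only cites \cite{Daudin}. Your write-up is a self-contained version of what that reference presumably contains, since this telescoping induction is the canonical way to derive the representation. One minor remark: your discussion of the orientation is slightly more than is needed. Because $f$ is single-valued on odd integers and $f(1)=1$, the underlying undirected graph of the functional digraph has the component of $1$ equal to a tree rooted at the fixed point $1$; hence any $n\in g(1)$ has a unique $f$-trajectory down to $1$, which immediately furnishes the sequence $n_0=1,\dots,n_b=n$ with $n_{i-1}=f(n_i)$. You say this, but it could be stated in one line rather than flagged as ``the only subtle point.''
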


 Note that $\frac{2^a}{3^b} \geq 1 \Rightarrow a \geq b\frac{log3}{log2}.$
 \begin{proof}
 See \cite{Daudin}
 \end{proof}

\subsection{Admissible tuple $(b,a>u_1>u_2,...>u_b=0)$}
Only some values of $(b,a>u_1>u_2,...>u_b=0)$ give an integer $n$ in theorem 1, most of them do not. 

\begin{definition} \label{Deftuple}
A tuple $(b,a \geq b\frac{log3}{log2}, a>u_1>u_2,...>u_b=0)$  is admissible if $\frac{2^a}{3^b}-\sum_{i=1,b}\frac{2^{u_i}}{3^{b-i+1}} \in \mathbb{N}.$
\end{definition}
In the following we use the alternative notation $(b,v_1,v_2,...,v_b)$ for the tuple $ (b,a=u_0>u_1>u_2,...>u_b=0) ,$ with $v_i=u_{i-1}-u_{i}, \; i=1,...b.$ 
\begin{equation*}
(b,\sum_{i=1,b}v_i, \sum_{i=2,b}v_i, \sum_{i=3,b}v_i,..., v_b,0)=(b,u_0>u_1>u_2,...>u_b=0) .
\end{equation*}
 In few words, $b+1$ is the number of odd integers in the chain from $1$ to $n$, $v_i$ is the number of divisions by $2$ at the $(b-i)^{th}$ step of $f$ (the exponent of $2$ at the $i^{th}$ step of $h$) and $a=\sum v_i.$
The tuple $(v_1,...v_b)$ is admissible if and only if
 
\begin{equation} \label{admissible}
 2^{\sum_{1}^{b} v_i}\equiv \sum_{i=1,b-1}2^{\sum_{i+1}^{b} v_j}3^{i-1} +3^{b-1}\pmod{3^b}.
 \end{equation} 
 
Let $g_b^*(1)=\{ admissible-tuples(v_1,v_2,...v_b) \; : v_i \le m=2.3^{b-1}, i=1,b\}.$ 

 The Wirsching-Goodwin representation of $g_b^*(1)$ (see \cite{Goodwin}, \cite{Daudin}) gives the whole structure of the $v_i$s. Its expression is the following: 

 \begin{theorem} \label{theoremGoodwin2}
There is a one to one relation between $g_b^*(1)$ with $b>1$ and the set of the  t-uples $ (b,v'_1,v'_2,...,v'_b)$ with $v'_i=v_i+2.3^{b-i}c_i,$ $c_i \in \mathbb{N^*}$, $v_i \in \mathbb{N}, \; i=2,...b  \; with \; 1 \leq v_i \leq 2.3^{b-i} $ and $4 \leq v_1 \leq 2.3^{b-1}+2$ is the unique
 solution of equation (\ref{admissible}).
%$$  a \equiv  \log_2\left( \sum_{i=1,b}2^{u_i}3^{i-1} \bmod 3^b \right) \pmod{2.3^{b-1}}.$$
\end{theorem}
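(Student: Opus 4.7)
The plan is to convert~(\ref{admissible}) into an equation in the cyclic group $(\mathbb{Z}/3^b\mathbb{Z})^\times$, solve it for $v_1$ in the canonical window, and then verify that shifts $v_i\mapsto v_i+2\cdot 3^{b-i}$ preserve admissibility. Setting $S_i:=v_i+v_{i+1}+\cdots+v_b$, equation~(\ref{admissible}) reads $2^{S_1}\equiv R\pmod{3^b}$ with $R:=\sum_{i=1}^{b-1}2^{S_{i+1}}3^{i-1}+3^{b-1}$.

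First, fix $(v_2,\ldots,v_b)$ with $1\le v_i\le 2\cdot 3^{b-i}$. Then $R\equiv 2^{S_2}\not\equiv 0\pmod 3$, so $R$ is a unit modulo $3^b$. Using the classical fact that $2$ is a primitive root modulo every $3^k$, the group $(\mathbb{Z}/3^b\mathbb{Z})^\times$ is cyclic of order $2\cdot 3^{b-1}$ with generator $2$; hence $S_1$, and therefore $v_1=S_1-S_2$, is pinned down modulo $2\cdot 3^{b-1}$. Reducing~(\ref{admissible}) modulo $3$ gives $(-1)^{S_1}\equiv(-1)^{S_2}\pmod 3$, forcing $v_1$ to be even. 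The interval $[4,\,2\cdot 3^{b-1}+2]$ contains exactly $3^{b-1}$ even integers, one from each even residue class modulo $2\cdot 3^{b-1}$, so a unique $v_1$ in this window solves the equation. This builds the canonical tuple.

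The technical core is to show that the shift $v_i\mapsto v_i+\Delta$, $\Delta:=2\cdot 3^{b-i}$, preserves admissibility. This shift multiplies every term of~(\ref{admissible}) containing some $S_j$ with $j\le i$ by the common factor $2^\Delta$ and leaves the remaining ``tail'' terms and the constant $3^{b-1}$ untouched. Comparing the new equation with $2^\Delta$ times the old one reduces the claim to
\[
(2^\Delta-1)\Bigl(\sum_{k=i}^{b-1}2^{S_{k+1}}3^{k-1}+3^{b-1}\Bigr)\equiv 0\pmod{3^b}.
\]
Writing $\nu_3$ for the $3$-adic valuation, the right-hand factor extracts $3^{i-1}$ leaving an inner sum whose leading term $2^{S_{i+1}}$ is a unit modulo $3$, so its $\nu_3$ equals $i-1$. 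For the other factor, write $2^{2\cdot 3^{b-i}}-1=(2^{3^{b-i}}-1)(2^{3^{b-i}}+1)$ and apply the Lifting the Exponent Lemma to the second factor (with $3\mid 2+1$ and odd exponent $3^{b-i}$) to obtain $\nu_3(2^\Delta-1)=b-i+1$. The two valuations sum to $b$, establishing the congruence; iterating covers all non-negative $c_i$.

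Combining these ingredients, any admissible $(v'_1,\ldots,v'_b)$ decomposes uniquely as $v'_i=v_i+2\cdot 3^{b-i}c_i$ with $v_i$ in the canonical window, the lifting closure ensures $(v_1,\ldots,v_b)$ is itself admissible, and the first step identifies it as the unique canonical solution of~(\ref{admissible}). The main obstacle is the precise $3$-adic bookkeeping in the lifting step: the identity $\nu_3(2^{2\cdot 3^{b-i}}-1)=b-i+1$ supplied by LTE is exactly what forces $2\cdot 3^{b-i}$ to be the correct step size for the $i$-th coordinate, with no finer granularity possible.
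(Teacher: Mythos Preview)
The paper itself does not prove this theorem: it is presented as the ``Wirsching--Goodwin representation'' and attributed to the cited references, with no in-text argument to compare against. Your proposal therefore stands as a self-contained proof of a result the paper merely imports.

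The argument is correct. The two essential ingredients are handled cleanly: that $2$ is a primitive root modulo every $3^b$, which pins $S_1$ and hence $v_1$ down modulo $2\cdot 3^{b-1}$ once $(v_2,\dots,v_b)$ is fixed; and the lifting computation, where the factorisation $2^{2\cdot 3^{b-i}}-1=(2^{3^{b-i}}-1)(2^{3^{b-i}}+1)$ together with LTE on the second factor yields $\nu_3(2^{2\cdot 3^{b-i}}-1)=b-i+1$, matching exactly the deficit $\nu_3\bigl(\sum_{k\ge i}2^{S_{k+1}}3^{k-1}+3^{b-1}\bigr)=i-1$ so that the product lands in $3^b\mathbb{Z}$. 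Your parity step and the count of even integers in $[4,\,2\cdot 3^{b-1}+2]$ (one per even residue class modulo $2\cdot 3^{b-1}$) are also correct.

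Two minor remarks. First, the displayed lifting identity is symmetric in the sign of the shift, so decreasing $v_i$ by $2\cdot 3^{b-i}$ equally preserves admissibility; this is what actually lets you reduce an arbitrary admissible tuple down to canonical form, not just build up from it. Second, pushing your mod-$3$ reduction to mod $9$ shows additionally that $v_1\not\equiv 0\pmod 6$, which recovers the set $V$ the paper uses later when discussing the distribution of $v_1$; this refinement is not needed for the theorem as stated but explains why the paper's subsequent cardinality $|V|=m/3$ is consistent with your count of $3^{b-1}$ possible values of $v_1$.
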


Therefore for each $(b$ and $(v_2,...v_b) \in (1,2.3^{b-1})^{b-1},$ there is a unique $v_1 \in  (4, 2.3^{b-1}+2)$  such that $(v_1,v_2,...v_b)$ is admissible.

\section{Outline}
Krasikov \cite{Krasikov} proved that $\theta(x) >cx^{3/7}$, with  $\theta(x)=\# \{u:T^k(u)=1, k \ge 0, u<x \},$ and $c$ is a constant. This result has been improved by Applegate and Lagarias \cite{Applegate} : $\theta(x) >x^{0.81}$ and then by Krasikov and Lagarias \cite{Krasikov2} :

\begin{equation}
\label{KL}
\theta(x) >x^{0.84}.
\end{equation}

 This is the best bound obtained till now for $\theta(x)$. A significative lower bound to say something new for the "$3x+1$ problem" would be $\theta(x) >Cx.$  

\bigskip 

The heuristic proposed in this paper is 
$$ \theta(x)\geqslant   \frac{3}{8}\frac{1}{2-\log_2(3)}  x \simeq 0.9035. x$$

The path to set this proposition has three steps. The steps 1 and 3 are well established results. The step 2 contains a lower bound that is not proved but seems to be true and can perhaps be proved with some more work.
\begin{enumerate}
\item{\bf{Step 1.}} 
The inequality $n \leq x$ is replaced by the little more stronger one $a(n) \leq a(x)$ which is more tractable.

 Let $n \in g(1)$, $(b,v_1,v_2,...,v_b)$ the corresponding t-uple, and  $a=\sum_{i=1,b}v_i$. 
 
 Let $a(x)=\log_2(x)+b\log_2(3)$. The key point is
  \begin{equation} \label{key}
 a \leq a(x) \Rightarrow \frac{2^a}{3^b} \leq x \Rightarrow n=\frac{2^a}{3^b}-\sum_{i=1,b}\frac{2^{u_i}}{3^{b-i+1}} < x
 \end{equation}
Let $N(b,x) = \#\{ n \in g(1) \; : \; b(n)=b, \; \; n\leq x\}$ be the number of odd integers less than $x$ and reached in $b$ steps.

Let $M(b,x) = \#\{ n \in g(1) \; : \; b(n)=b, \; \; a(n) \leq a(x) \}$ be the number of odd integers reached in $b$ steps and such that $a(n) \leq a(x)$. (\ref{key}) implies that 

\begin{equation*}
M(b,x) \leq N(b,x)
\end{equation*} 

and 
\begin{equation}
\sum_{b=1}^{\infty}M(b,x) \leq \sum_{b=1}^{\infty}N(b,x)=\frac{\theta(x)}{2}
\end{equation}
\item{\bf{Step 2.}}
For fixed $b$, $a$ behaves approximatively as the sum of $b$ independent uniform variables, 
$$M(b,x) \simeq \frac{3}{2}\binom{a(x)-4}{b}3^{-b},$$ and the proposed but yet unproved inequality: $$ \frac{\theta(x)}{2} \geq 1+\sum_{b=1}^{\infty}\frac{3}{2}\binom{a(x)-4}{b}3^{-b}.$$
\item{\bf{Step 3.}}
  $$\sum_{b=0}^{\infty}\frac{3}{2}\binom{a(x)-4}{b}3^{-b}=\frac{3}{16}\frac{1}{2-\log_2(3)}  x$$
\end{enumerate}

\section{Step 2: ${\theta(x)/2} \geq \sum_{b=1}^{\infty}\frac{3}{2}\binom{a(x)-4}{b}3^{-b}.$ }  
First let us recall some results about the pdf of the sum of uniform variables on integers.
\subsection{Pdf of the sum of uniform variables on integers}

Let $U_m$ be the uniform pdf on integers $(1,m)$, with $P(X=i)=\frac{1}{m}.$ Let $X_1,...X_b$ be $b$ independent variables with $X_i \sim U_m.$ The probability generating function $q_b(s)$ of $S_b=\sum_{i=1}^bX_i$ is $$q_b(s)=\left[\frac{1}{m}\sum_{i=1,m}s^i \right]^b $$
with $E(S_b)=b\frac{m+1}{2}$ and $V(S_b)=\frac{b(m^2-1)}{12},$ and 

\begin{eqnarray*} 
P_{SU(b)}(S=a) & = & \frac{q_b^{(a)}(0)}{a!} \\
 & = & \frac{1}{m^b}\sum_{{\tiny \begin{array}{c}
 n_1 \geq 0,...n_m \geq 0\\
 n_1+...n_m =b, \\
  n_1+2n_2+...mn_m=a
  \end{array}}}
  \left( \begin{array}{c}
 b \\
  n_1 \; n_2 \;...n_m
\end{array}   \right) \\
 & = & \frac{1}{m^b} {\binom{b}{a}}_m 
\end{eqnarray*}
$ {\binom{b}{a}}_m $ is the polynomial or extended binomial coefficient\footnote{This definition is different from the usual one. The usual definition of $\binom{b}{a}_m$ is with $U_m$ the uniform pdf on integers $(0,m)$} , see \cite{Neuschel}, that has no closed expression but can be computed  by convolution, using the relation
\begin{equation} \label{recurence}
 {\binom{b}{a}}_m=\sum_{i=1,m}{\binom{b-1}{a-i}}_m
 \end{equation} 

 An integer composition of a nonnegative integer $n$ with $k$ summands, or parts, is a way of writing $n$ as a sum of $k$ nonnegative integers, where the order of parts is significant. 
 A classical result in combinatorics is that the number of S-restricted integer compositions
  of $n$ with $k$ parts is given by the coefficient of $x^n$ of the polynomial or power series $(\sum_{i \in S} x^i)^k, $ which is the extended binomial coefficient, see (\cite{Eger}). The restriction considered in this paper is  $S=(1,m)$. Therefore ${\binom{b}{a}}_m$ is the number of compositions of $a$ in $b$ parts restricted to lay in $(1,m).$

Although $\binom{b}{a}_m$ does not possess a closed form expression, it possesses one in the "no-constraint" particular case defined by condition C1:

\textbf{Condition C1}:  $$a \leq m+b-1$$ 

\begin{proposition}\label{CoefExact}
if  C1 is true 
\begin{eqnarray*}
& (i) & \binom{b}{a}_m =\binom{a-1}{b-1} \\
& (ii) & \sum_{j=b,a} \binom{b}{j}_m =\binom{a}{b}
 \end{eqnarray*}  

\end{proposition}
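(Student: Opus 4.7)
The plan is to use the combinatorial interpretation of $\binom{b}{a}_m$ already recalled in the text, namely that it counts integer compositions of $a$ into $b$ ordered parts, each part lying in $\{1,2,\ldots,m\}$.

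For (i), I would first argue that condition C1 makes the upper bound $m$ non-binding. Indeed, if some composition $a=x_1+\cdots+x_b$ with each $x_i\geq 1$ had a part $x_i\geq m+1$, then the remaining $b-1$ parts contribute at least $b-1$, giving $a\geq (m+1)+(b-1)=m+b$, which contradicts $a\leq m+b-1$. Therefore under C1 the number of compositions of $a$ into $b$ parts in $\{1,\ldots,m\}$ equals the number of compositions of $a$ into $b$ positive parts. The latter is the classical stars-and-bars count $\binom{a-1}{b-1}$, which proves (i).

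For (ii), observe that for each $j$ with $b\leq j\leq a$, condition C1 at level $(b,j)$ reads $j\leq m+b-1$, which follows from $j\leq a\leq m+b-1$. Hence (i) applies termwise and
\begin{equation*}
\sum_{j=b}^{a}\binom{b}{j}_m=\sum_{j=b}^{a}\binom{j-1}{b-1}.
\end{equation*}
The hockey-stick identity $\sum_{j=b}^{a}\binom{j-1}{b-1}=\binom{a}{b}$ (a standard consequence of Pascal's rule, proved by induction on $a$) then yields (ii).

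I don't anticipate a significant obstacle here; the only subtle point is recognising that C1 is exactly the threshold at which the cap $m$ becomes inactive, and checking that this condition is automatically inherited by the partial sums in (ii).
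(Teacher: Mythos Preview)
Your proof is correct and follows essentially the same approach as the paper: part (i) is identified with the unconstrained composition count $\binom{a-1}{b-1}$ once C1 makes the cap $m$ inactive, and part (ii) is obtained from (i) together with the upper-summation (hockey-stick) identity $\sum_{l}\binom{l}{k}=\binom{n+1}{k+1}$. Your write-up is in fact slightly more explicit than the paper's, since you spell out why C1 forces every part to be at most $m$ and why C1 is inherited by each $j\le a$ in the sum.
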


\begin{proof}
(i) is the integer composition of the positive integer $a$ with $b$ summands, without any constraint on the summands.
%The proof of (i) uses the relation (\vref{recurence}) and the upper summation relation (\vref{Uppersum}), see (\cite{Fahssi}) and comes from the fact that the sum in (\vref{recurence}) goes from $1$ to $a-b+1$. 
The proof of (ii) comes from (\vref{Uppersum}).
\begin{equation} \label{Uppersum}
\sum_{l=0,n} \binom{l}{k}= \binom{n+1}{k+1}
\end{equation}
We use the convention  $l<k \Rightarrow \binom{l}{k}=0.$ 
\end{proof}

\subsection{Relation between the $3x+1$ problem and the sum of uniform variable on integers}
\subsubsection{Lower bound}
 The Wirsching-Goodwin representation of the odd numbers connected to 1 in $b$ (odd numbers)-steps (see \cite{Goodwin}, \cite{Daudin}) gives the structure of the $v_i$s:
 
Let $(v_1,...v_b) \in g_b^*(1)$ and $a=\sum_{i=1,b} v_i$. The number of elements of $g_b^*(1)$ is $m^{b-1}=2^{b-1}3^{(b-1)^2}=\frac{3}{2}\left(\frac{m}{3}\right)^b$. The $(b-1)$-order contingency table $(v_2 \times v_3...\times v_b)$ is composed of ones in each cell, so the set of random variables, if one pick up a cell at random with the same probability $m^{-b+1}$ for each cell, $(v_2,v_3,...v_b)$ is uniformly and independently distributed. There are two differences between $a$ and the sum of $b$ uniform and independent variables on $(1,m)$:
\begin{itemize}
\item Let $V=\left\{v \in  (4,m+2),\: \mod(v,2)=0 \: \mod(v,3) \neq 0 \right\}$. The variable $v_1$ is uniformly distributed on $V$  in place of $(1,m)$. The sum of uniform variables have thus to be suited to this particular $v_1$ by modifying the initialisation of the convolution equation (\ref{recurence}): the vector with $m$ ones in positions $(1,m)$, is replaced by the vector $(0,0,0,3,0,0,0,3,0,3,...)$ with $3$ on the $m/3$ positions of $V$. Let $C(a,b,m)$ be the resulting modified extended binomial coefficient.
\item $v_1$ is not independent of $(v_2,...v_b)$ because $(v_2,...v_b)$ determinates $v_1$. 
We study here the impact of this dependency on the distribution of $a$. This is the more difficult point of the paper, and not yet proved. We use a heuristic inequality.
\end{itemize}

Let
\begin{eqnarray*}
N_b(v,a_1)& = & \#\{n \in g_b(1), \: v_1=v, a-v_1=a_1 \} \\
 & = & \#\{admissible-tuples(v_1,...v_b): \: v_1=v, \sum_{i=2,b}v_i=a_1 \},
\end{eqnarray*}
The margins of $N_b(v,a_1)$ are
 $$N_b(v)=\sum_{a1=b-1}^{m(b-1)}N_b(v,a_1)=\frac{m^{b-1}}{m/3}$$
 and  
 $$N_b(a_1)=\sum_{v \in V}N_b(v,a_1)=\binom{b-1}{a_1}_m.$$
  $N_b(v)$ does not depend on $v \in V$. However  $N_b(v,a_1)$ depends on $v$ and $a_1$. Let $$\overline{N_b(a_1)}=\frac{N_b(a_1)}{m/3}=\frac{\binom{b-1}{a_1}_m}{m/3},$$ the mean value of $N_b(v,a_1)$ with $a_1$ fixed and $v \in V.$ Let
  $$\alpha_b(v,a_1)=\frac{N_b(v,a_1)}{\overline{N_b(a_1)}}. $$
  Now we can express $M(b,x) = \#\{ n \in g_b(1) \; :  \; a(n) \leq a(x) \}$ using $N_b(v,a_1)$:
  \begin{eqnarray*}
  M(b,x) & = & \sum_{v+a_1 \leq a(x)}N_b(v,a_1) \\
         & = & \sum_{v+a_1 \leq a(x)}\alpha_b(v,a_1)\overline{N_b(a_1)} \\
         & = & 3/m\sum_{v+a_1 \leq a(x)}\alpha_b(v,a_1)\binom{b-1}{a_1}_m \\
  \end{eqnarray*}

   If Condition C1 ($a(x) < m+b-1$)  is true,
   \begin{eqnarray*}
   M(b,x) & = & \frac{3}{m}\sum_{v+a_1 \leq a(x)}\alpha_b(v,a_1)\binom{a_1-1}{b-2} \\
    & = & \frac{3}{m}\sum_{v \in V}\sum_{a_1=b-1}^{a(x)-v}\alpha_b(v,a_1)\binom{a_1-1}{b-2}\\
   \end{eqnarray*} 
    \textbf{Condition C2}: $$ \forall (v,a_1), \; \alpha_b(v,a_1)=1$$
     
    If  C2  is true,
    \begin{eqnarray*}
   M(b,x) & = & \frac{3}{m}\sum_{v \in V}\sum_{a_1=b-1}^{a(x)-v}\binom{a_1-1}{b-2}\\
    & = & \frac{3}{m}\sum_{v \in V}\binom{a(x)-v}{b-1}\\
    & > & \frac{1}{m}\sum_{i=5}^{m+2}\binom{a(x)-i}{b-1} \\
    & > & \frac{1}{m}\binom{a(x)-4}{b}\\
    & > & \frac{3}{2}\binom{a(x)-4}{b}3^{-b}
   \end{eqnarray*} 
   
   The third line comes from the following inequalities:
\begin{eqnarray*}
 3\binom{a(x)-4}{b-1} & > & \binom{a(x)-5}{b-1}+ \binom{a(x)-6}{b-1} + \binom{a(x)-7}{b-1} \\
 3\binom{a(x)-8}{b-1} & > & \binom{a(x)-8}{b-1} +\binom{a(x)-9}{b-1} + \binom{a(x)-10}{b-1} \\
 3\binom{a(x)-10}{b-1} & > & \binom{a(x)-11}{b-1}+ \binom{a(x)-12}{b-1} + \binom{a(x)-13}{b-1} \\
 3\binom{a(x)-14}{b-1} & > & \binom{a(x)-14}{b-1}+ \binom{a(x)-15}{b-1} + \binom{a(x)-16}{b-1} \\
 ...
 \end{eqnarray*} 
   Let $b=\log(\log_2(x))+1$. Therefore $2\times3^{b-1}=2\times3^{\log(\log_2(x))}>2\log_2(x)$. 
   
   $x>4 \Rightarrow \log_2(x)>(\log(\log_2(x))+1)\log_2(3/2)+1$. Therefore $2\times3^{b-1}>b\log_2(3)-b+1+\log_2(x)$ and if $b > \log(\log_2(x))+1,$ the condition C1 is achieved. The maximum number of odd numbers less than $x$ is obtained for $b \simeq 2log_2(x),$ and most of them are obtained with $\log_2(x)/4 \leq b \leq 4log_2(x).$  $\sum_{b=1}^{\log(\log_2(x))}\frac{3}{2}\binom{a(x)-4}{b}3^{-b}$ is negligible. For instance, $x=2.10^{10}$ implies $\log(\log_2(x))+1=4.53,$ and $\sum_{b=1}^{5}\frac{3}{2}\binom{a(x)-4}{b}3^{-b}=4793$ (less than $\#\{n \in g(1), b(n)\leq 5, \; n<x\}= 5510$), that is a proportion $5.10^{-7}$ of the total of odd numbers less than $2.10^{10}$. This proportion tends to $0$ when $x$ tends to $\infty$, see the proposition\ref{loiNormale}.
   
    The condition C2 is false and some work has to be done to prove that the approximation made assuming C2, is sufficiently precise to conclude.
    Let $O(b,a)$ be the number of elements of $g_b^*(1)$ with $\sum_{i=1,b}v_i=a.$ Note that $M(b,x)=O(b,a(x))$.  Two approximations of $O(b,a)$ are now available:
\begin{itemize}
\item $O_1(b,a)= C(a,b,m)$ with $a \in (b,mb)$,
\item $O_2(b,a)= \frac{\binom{a-5}{b-1}}{m}$ with $a \in (b,m+b-1)$,
\end{itemize}
Note that $a \in (b,m+b-1) \Rightarrow O_2(b,a) \leqslant O_1(b,a).$ 

The proposed approximation for $M(b,x)$ is thus
 $$M_2(b,x)=\frac{3}{2}\frac{\binom{a(x)-4}{b}}{3^b},$$
   and the candidate lower bound for $\frac{\theta(x)}{2}$ is 
   $$M_2(x)=1+\frac{3}{2}\sum_{b=1}^{\infty}\binom{a(x)-4}{b}3^{-b}.$$
   By convention we assume that the odd number $1$ is obtained with $b=0$. 
\subsubsection{A toy example with b=5}
Let $b=5$. The 688 747 536 odd integers of $g_5^*(1)$ have been generated, and the values of $v_1$ and $a$ have been recorded. The left figure of table \ref{dista(1:mb)} gives the plot of $a$. The values of $O_1(b,a)$ have also been plotted on the same figure and the fit is so good that two curves cannot be separated.

\begin{table}[h]
\begin{tabular}{cc}
\includegraphics[width=7cm]{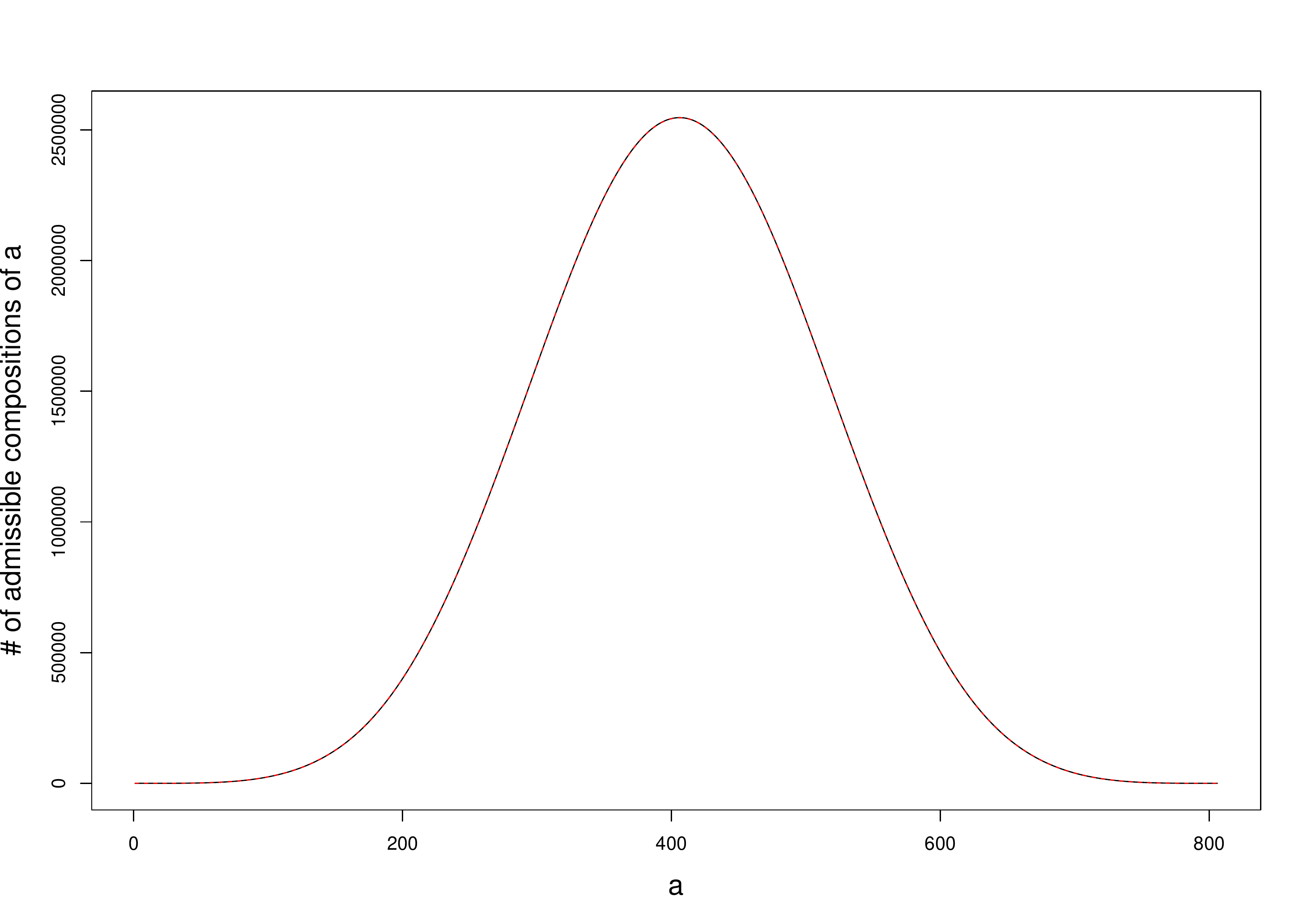} 
&
\includegraphics[width=7cm]{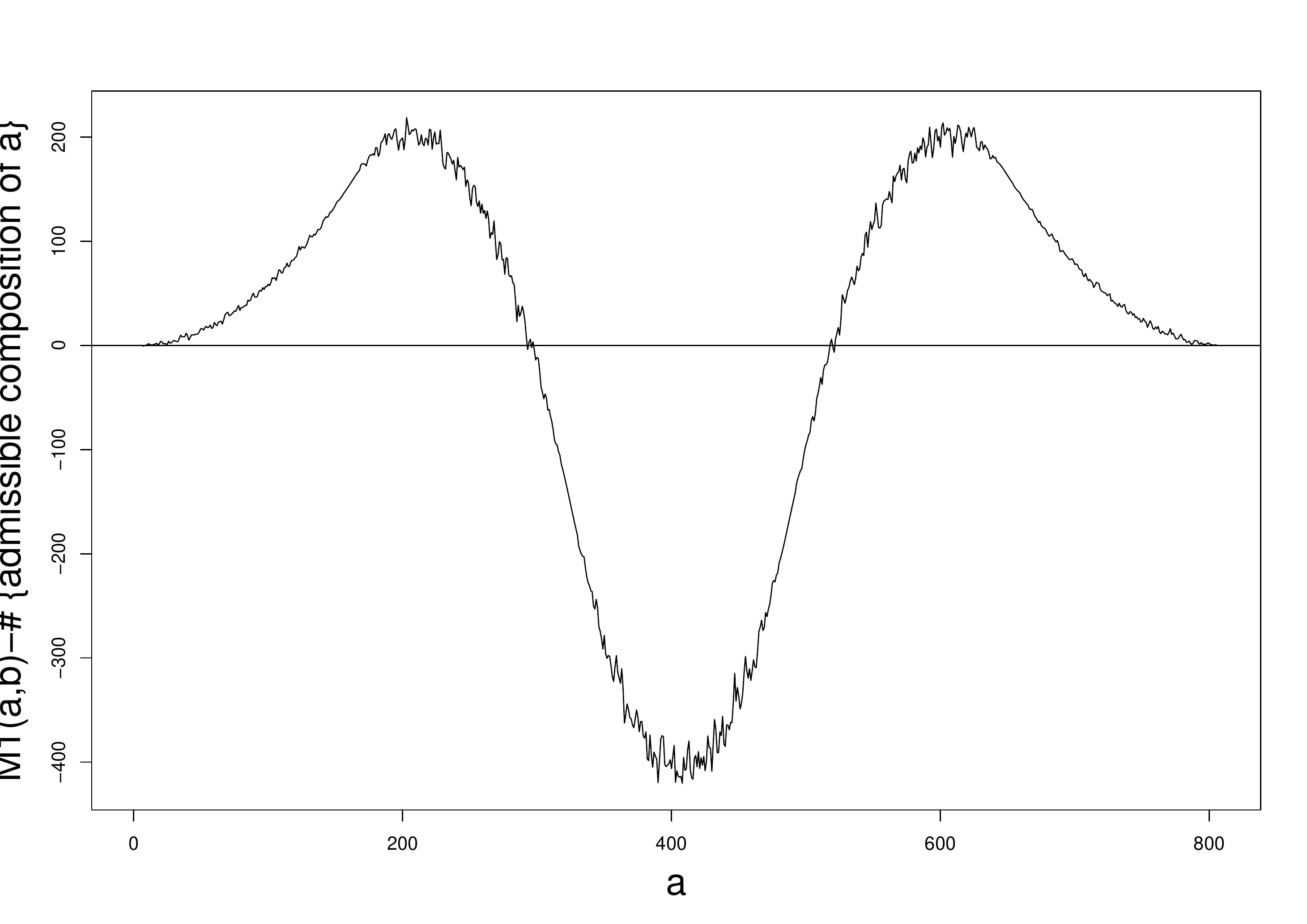} 
\end{tabular}
\caption{left : Number of odd integers for each $a$ obtained with $b=5$ steps, $O(5,a)$ (black continuous line), $O_1(5,a)$ (red dashed line), right: $O_1(5,a)$ - $O(5,a)$ }
\label{dista(1:mb)}
\end{table}

\begin{table}[h]
\begin{tabular}{cc}
\includegraphics[width=7cm]{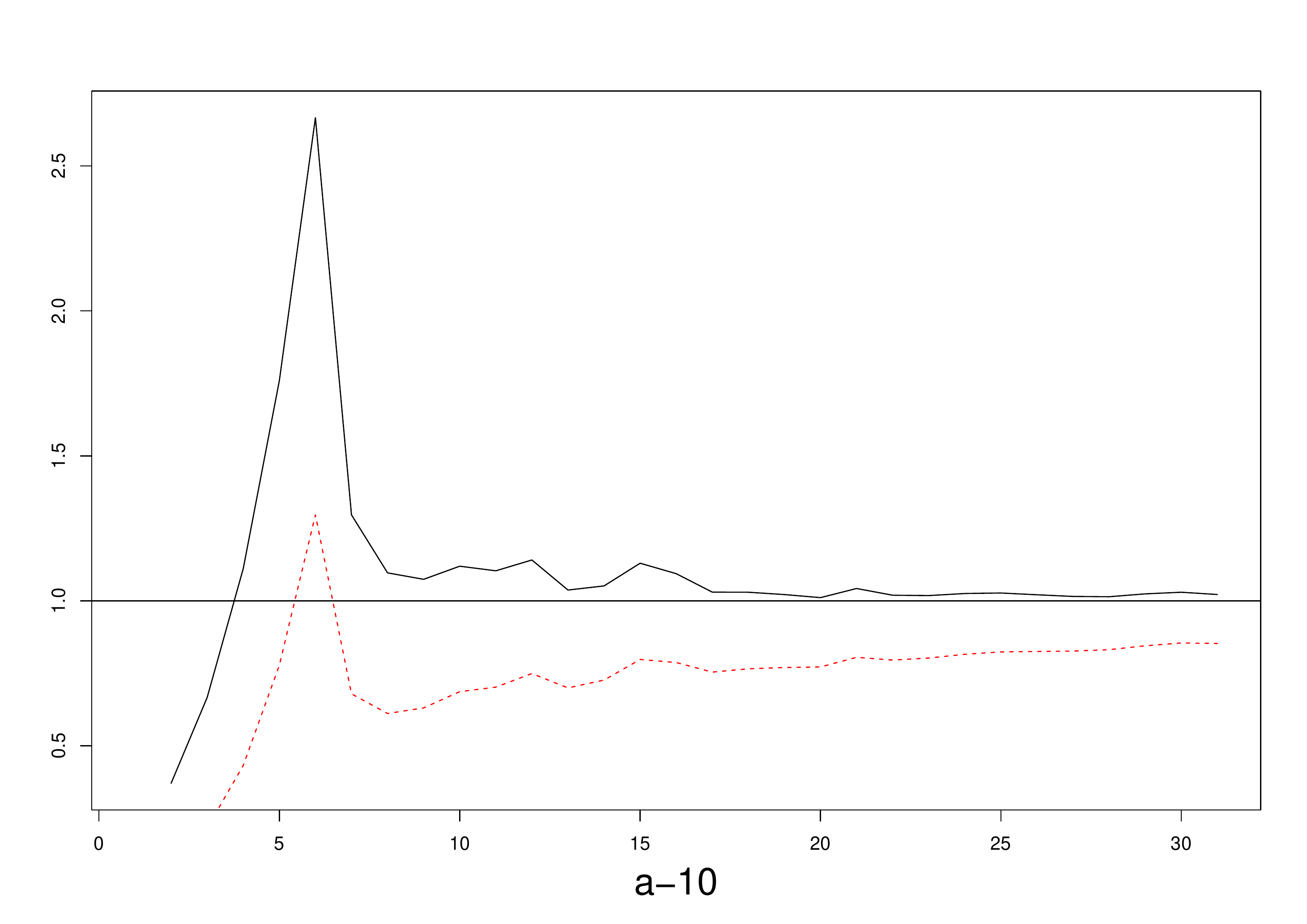} 
&
\includegraphics[width=7cm]{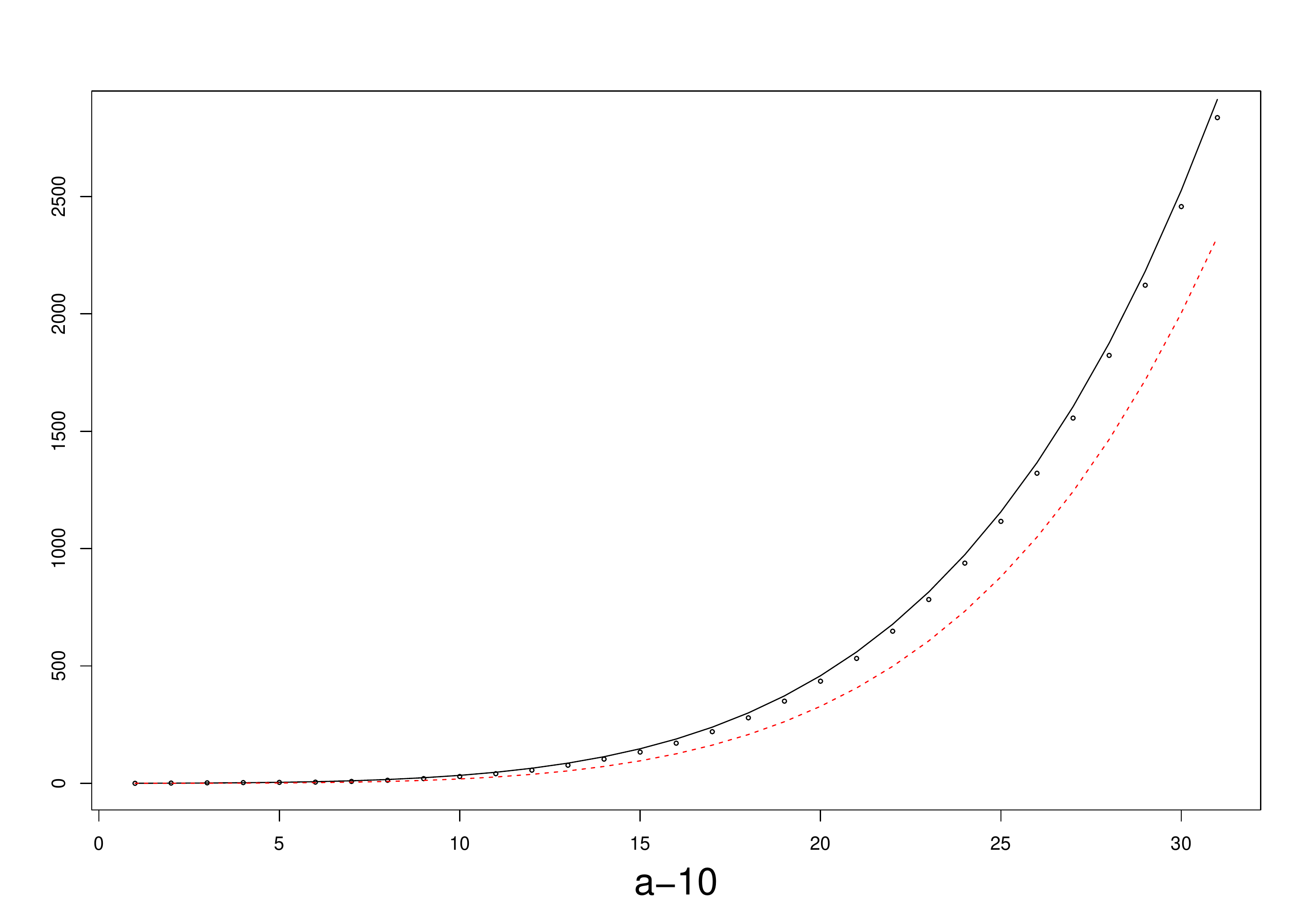} 
\end{tabular}
\caption{left : $O_1(5,a)/O(5,a)$ (black line) and $O_2(5,a)/O(5,a)$ (red dashed line) for $a \in (10,40)$, right: $\sum_{i=5,a}O(5,i)$ for $a \in (10,40)$ (circles), $\sum_{i=5,a}O_1(5,i)$  (black line) and $\sum_{i=5,a}O_2(5,i)$  (red dashed line) }
\label{cdfa}
\end{table}

However the right figure of table \ref{dista(1:mb)} shows that the two curves are not identical: the approximation overestimates the number of cases for extremal values of $a$ and underestimates the central values.
This result is expected because $a< b\log_2(3)$ is impossible for elements of $g_b^*(1)$ but possible for compositions of $a$. This implies that the lower tail of $a$ for elements of $g_b^*(1)$ is shorter than the lower tail of the sum of uniform distributions. The same is true for the upper tail by symmetry. The left figure of table \ref{cdfa} shows that, for $b=5$, the ratio $\frac{O_1(b,a)}{O(b,a)}$ is  largely greater than one for small $a$. $\frac{O_2(b,a)}{O(b,a)}< 1$ 
for most values of $a$ but not for all of them. The right figure of table \ref{cdfa} shows that $\sum_{i=5,a}O_1(b,i)$ overestimates $ \sum_{i=5,a}O(5,i)=\{ \# \{ n \in g(1), \; a(n) \leq a \}$ and $\sum_{i=5,a}O_2(b,i)$ is a better candidate for a lower bound.

 The figure \ref{a1v1} shows that condition C2 is false: $\alpha_b(4,a_1) < 1$ for low values of $a_1$ and $\alpha_b(4,a_1) > 1$ for high values of $a_1$. The pattern is opposite with $\alpha_b(10,a_1)$. These differences explain why the tail of $a$ is different from the tail of the sum of independent uniform variables: the smallest $v_1$ ($v_1=4$) is associated to higher values of $a_1$.

\begin{figure}[h] 
\begin{center}
\includegraphics[width=6cm]{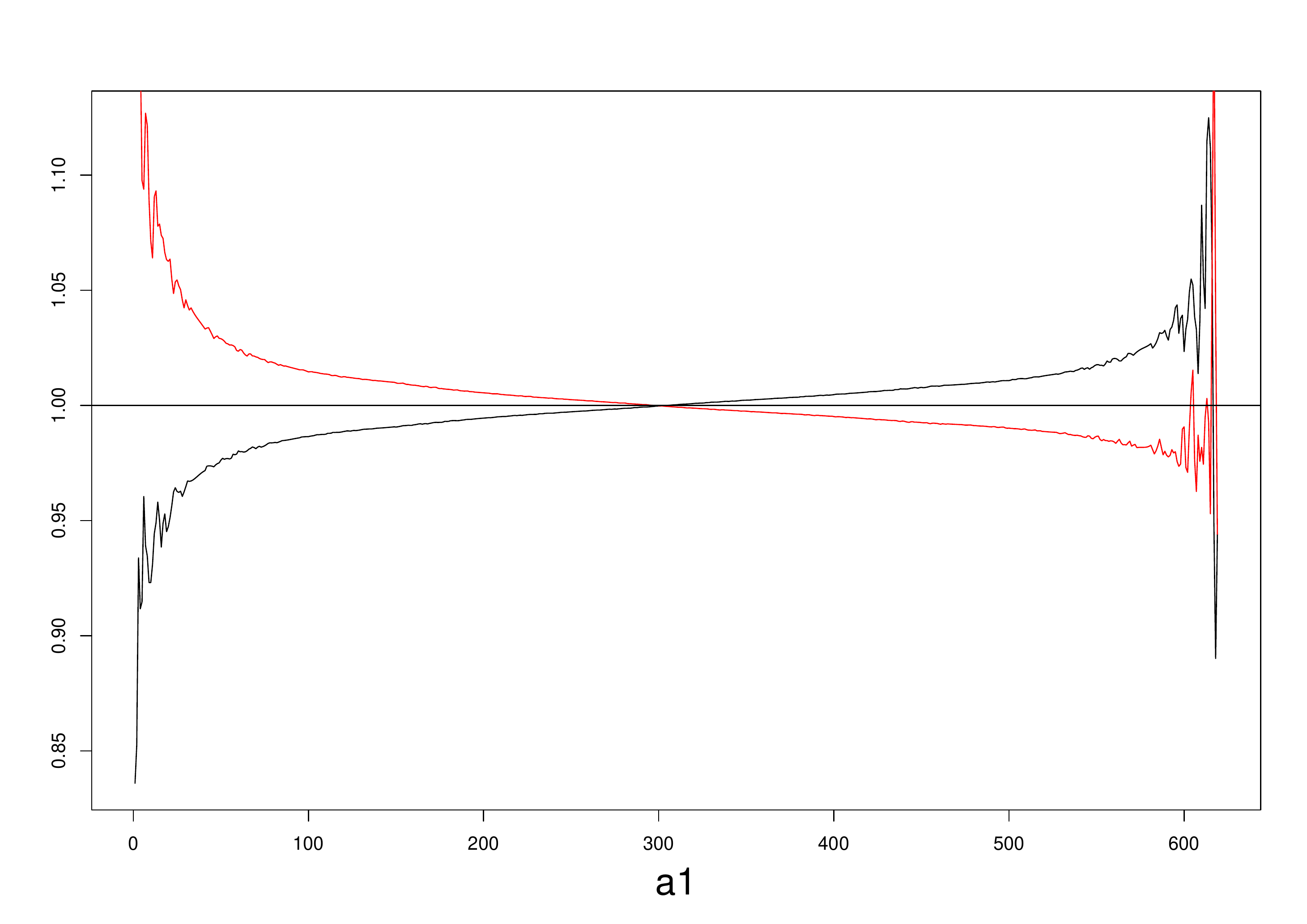} 
\caption{Plot of $\alpha_b(4,a_1)$ (black) and $\alpha_b(v_1=10,a_1)$ (red)}
\label{a1v1} 
\end{center}
\end{figure}

The variability of $N_b(v,a1)$ around its mean seems to be controlled. The figure \ref{EffectifV1} shows that the $54$ values of $\sum_{a_1=1,54}N_b(v,a1)$ with $v \in V,$ lie between 5174 and 6520. The values are clustered in 22 groups, 6 groups with one element and 16 with 3 elements. The values of $v_1$ for the 3-elements groups are separated by $54$: for example the group composed with $v_1=4,58,112$ is such that  $\sum_{a_1=1,54}N_b(v,a1)=5604.$ The mean of $\sum_{a_1=1,54}N_b(v,a1)$ is $5856.5=\binom{54}{4}3^{-4}$ and the standard deviation is equal to $433.2$. This pattern is produced by equation (\ref{admissible}). 

\begin{figure}[h] 
\begin{center}
\includegraphics[width=6cm]{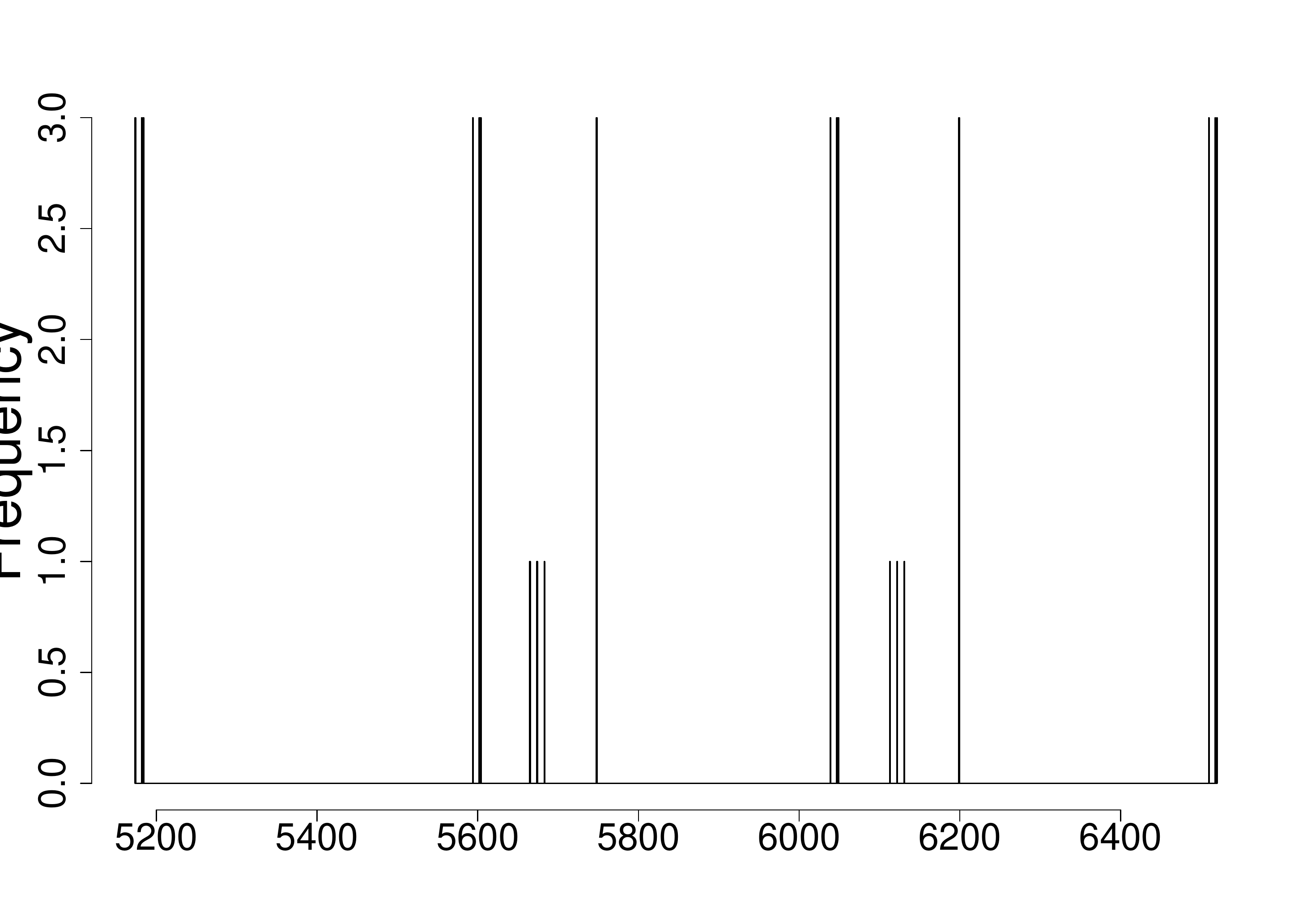} 
\caption{Histogram of $\sum_{a_1=1,54}N_b(v,a1)$  for $v \in V$}
\label{EffectifV1} 
\end{center}
\end{figure}

\subsection{Another lower bound with $v_1$=4}
 $v_1=4$ is the lower possible value of $v_1$ and gives $n=5$ for $b=1$. $v_2=1$ implies that $n=3$ for $b=2$. Therefore $v_2 \geq 3$ for all elements of $g_5^*(1)$ and $b>2$.
  \begin{eqnarray*}
  M(b,x) & \geq & \sum_{a_1 \leq a(x)-4-2}N(4,a_1) \\
         & \geq & \sum_{a_1 \leq a(x)-6}\alpha_b(4,a_1)\overline{N_b(a_1)} \\
         & \geq & 3/m\sum_{a_1 \leq a(x)-6}\alpha_b(4,a_1)\binom{b-1}{a_1}_m \\
  \end{eqnarray*}
 
   If Condition C1 ($a(x) < m+b-1$)  is true,
   \begin{eqnarray*}
   M(b,x) & \geq & \frac{3}{m}\sum_{a_1 \leq a(x)-6}\alpha_b(4,a_1)\binom{a_1-1}{b-2} \\
    & \geq & \frac{3}{m}\sum_{a_1=b-1}^{a(x)-6}\alpha_b(4,a_1)\binom{a_1-1}{b-2}\\
   \end{eqnarray*} 
    If $ \forall a_1, \; \alpha_b(4,a_1)=1$,
     
    \begin{eqnarray*}
   M(b,x) & \geq & \frac{3}{m}\sum_{a_1=b-1}^{a(x)-6}\binom{a_1-1}{b-2}\\
    & \geq & \frac{3}{m}\binom{a(x)-6}{b-1}\\
    & \geq & \frac{3}{2}\binom{a(x)-6}{b-1}3^{-b+1}
   \end{eqnarray*} 
  $M_3(x)=\sum_{b=1}^{\infty}\frac{3}{2}\binom{a(x)-6}{b-1}3^{-b+1}$ is thus another approximation of $M(x)$ and $M_3(x) < M_2(x)$.

\section{Step 3: closed form for $\sum_{b=1}^{\infty}\frac{3}{2}\binom{a(x)-4}{b}3^{-b}.$}
\begin{proposition}\label{Simplification}
$$\sum_{b=0}^{\infty} 3^{-b} \binom{b\log_2(3)+\log_2(x)}{b}=\frac{2x}{2-\log_2(3)} .$$
\end{proposition}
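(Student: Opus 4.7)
The plan is to recognize the sum as a specialization of the Fuss--Catalan--type generating function attached to a generalized binomial series, and to exploit the algebraic identity $2^{\log_2 3}=3$ to solve the associated implicit equation in closed form.

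After setting $y=\log_2(x)$ and $\alpha=\log_2(3)$, so that $3^{-b}=2^{-\alpha b}$, the sum assumes the canonical form
\[
S \;=\; \sum_{b\ge 0} \binom{\alpha b+y}{b}\,z^{b}\qquad\text{at}\qquad z=\tfrac{1}{3}.
\]
The core tool is the classical identity (Graham--Knuth--Patashnik, \emph{Concrete Mathematics}, Sections~5.4 and~7.5): if $w=w(z)$ is the formal power series defined implicitly by $w=1+z\,w^{\alpha}$, then
\[
\sum_{b\ge 0}\binom{\alpha b+y}{b}\,z^{b} \;=\; \frac{w^{y}}{\,1-\alpha+\alpha/w\,}.
\]
This is obtained by writing $\binom{\alpha b+y}{b}=[t^{b}](1+t)^{\alpha b+y}$ and applying Lagrange--B\"urmann inversion with $f(t)=(1+t)^{y}$ and $\phi(t)=(1+t)^{\alpha}$; since the paper does not cite this identity elsewhere, I would include a short self-contained derivation.

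The decisive observation is that at $z=1/3$ the implicit equation $w=1+w^{\alpha}/3$ admits the \emph{explicit} root $w=2$: indeed $2^{\alpha}=2^{\log_2 3}=3$, hence $1+2^{\alpha}/3=1+1=2$. Substituting $w=2$ yields
\[
S \;=\; \frac{2^{y}}{1-\alpha+\alpha/2} \;=\; \frac{x}{1-\alpha/2} \;=\; \frac{2x}{2-\log_2(3)},
\]
which is the claimed identity. All the work---once the framework is in place---collapses to this one substitution.

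The one point requiring care is analytic rather than algebraic: the Lagrange--B\"urmann formula first emerges as an equality of formal power series in $z$ (with $\alpha,y$ entering only through the binomial coefficients), so one must verify that $z=1/3$ lies inside the disk of convergence of the relevant branch of $w$. Differentiating $w=1+z\,w^{\alpha}$ locates the nearest singularity at $z^{*}=(\alpha-1)^{\alpha-1}/\alpha^{\alpha}\approx 0.352$, which exceeds $1/3\approx 0.333$; the principal branch is thus analytic on a neighborhood of $[0,1/3]$, the series converges absolutely, and the formal identity promotes to the numerical one. The main obstacle, therefore, is producing a clean short derivation of the Fuss--Catalan closed form together with this convergence check; the evaluation itself is immediate from the miraculous root $w=2$.
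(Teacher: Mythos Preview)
Your argument is correct and is essentially the paper's own proof: both invoke the generalized binomial series identity from \emph{Concrete Mathematics}, recognize that $w=B_t(z)$ satisfies $w=1+zw^{t}$, and observe that at $z=1/3$, $t=\log_2 3$ the value $w=2$ is a root because $2^{\log_2 3}=3$. The paper additionally notes that $w=4$ is a second root of $w-\tfrac13 w^{\log_2 3}-1=0$ and simply selects $2$; your convergence/branch discussion (locating the singularity at $z^{*}=(\alpha-1)^{\alpha-1}/\alpha^{\alpha}\approx 0.352>1/3$) is a welcome addition, since it actually justifies why the principal branch with $w(0)=1$ takes the value $2$ rather than $4$.
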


\begin{proof}
 The generalized binomial series $B_t(z)=\sum_{n=0}^{\infty}\binom{tn+1}{n}\frac{1}{tn+1}z^n$ with $n$ integer and $t,z,r$ real, has the following property, see ( \cite{Graham}, eq. 5.61):
 \begin{equation*}
 \frac{[B_t(z)]^r}{1-t+\frac{t}{B_t(z)}}=\sum_{n=0}^{\infty}\binom{tn+r}{n}z^n
 \end{equation*}
 
 Another property of $B_t(z)$ is given in   \cite{Graham}, eq. 5.59:
 \begin{equation*}\label{propriete B_t(z)}
 [B_t(z)]^{1-t}-[B_t(z)]^{-t}=z
 \end{equation*}
 that may be written $B_t(z)-1=z[B_t(z)]^{t}$\\
 Let $z=1/3$ and $t=\log_2(3)$, we obtain
 
  $B_{\log_2(3)}(1/3)-\frac{1}{3}[B_{\log_2(3)}(1/3)]^{{\log_2(3)}}=1.$  The  equation
  \begin{equation*}
  x-\frac{1}{3}x^{log_23}-1=0
  \end{equation*}
  possesses only two roots: $2$ and $4$.
  Therefore 
 \begin{eqnarray*}
 B_{\log_2(3)}(1/3) & = & \sum_{n=0}^{\infty}\binom{\log_2(3)n+1}{n}\frac{1}{\log_2(3)n+1}3^{-n}\\
 & = & 2.
 \end{eqnarray*}
 Therefore
 
 \begin{eqnarray*}
 \sum_{b=0}^{\infty} 3^{-b} \binom{b\log_2(3)+\log_2(x)}{b} & = & \frac{ \left [ B_{\log_2(3)}(1/3)\right ]^{\log_2(x)+1}}{B_{\log_2(3)}(1/3)(1-\log_2(3))+\log_2(3)}\\
  & = & \frac{2^{\log_2(x)+1}}{2(1-\log_2(3))+\log_2(3)}\\
  & = & \frac{1}{2-\log_2(3)} 2^{\log_2(x)+1}\\
 \end{eqnarray*}

\end{proof}

Now we have closed form expressions for $M_2(x)$ and $M_3(x)$:
\begin{proposition}
\begin{eqnarray}
M_2(x) & =& \frac{3}{16}\frac{1}{2-\log_2(3)}x\\
M_3(x) & =& \frac{9}{64}\frac{1}{2-\log_2(3)}x
\end{eqnarray}

\end{proposition}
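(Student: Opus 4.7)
The plan is to reduce both identities directly to Proposition \ref{Simplification} by a change of variable, using the fact that a constant shift in the upper entry of the binomial coefficient can be absorbed into the logarithm appearing in it, and that an index shift in $b$ simply pulls out an extra factor of $\log_2(3)$. Since $a(x)=\log_2(x)+b\log_2(3)$ is linear in $b$, no new analytic work is required beyond Proposition \ref{Simplification}: both computations are pure bookkeeping around that identity.

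For $M_2(x)$, I would observe that $a(x)-4=\log_2(x/16)+b\log_2(3)$ and apply Proposition \ref{Simplification} with $x$ replaced by $x/16$, yielding
$$\sum_{b=0}^{\infty} 3^{-b}\binom{a(x)-4}{b}=\frac{2(x/16)}{2-\log_2(3)}=\frac{x}{8\,(2-\log_2(3))}.$$
Multiplying by $\frac{3}{2}$ then produces the stated closed form $\frac{3}{16}\cdot\frac{x}{2-\log_2(3)}$.

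For $M_3(x)$, I would first reindex by $c=b-1$, which turns $3^{-b+1}$ into $3^{-c}$ and converts the binomial argument into $a(x)-6=\log_2(x)+(c+1)\log_2(3)-6=\log_2(3x/64)+c\log_2(3)$. Applying Proposition \ref{Simplification} with $x$ replaced by $3x/64$ gives
$$\sum_{c=0}^{\infty} 3^{-c}\binom{c\log_2(3)+\log_2(3x/64)}{c}=\frac{2(3x/64)}{2-\log_2(3)}=\frac{3x}{32\,(2-\log_2(3))},$$
and multiplying by $\frac{3}{2}$ yields the asserted $\frac{9}{64}\cdot\frac{x}{2-\log_2(3)}$.

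The only (very minor) subtlety is that the defining sums of $M_2$ and $M_3$ are indexed from $b=1$, with the convention that the trailing constant accounts for the odd integer $1$ reached in $0$ steps, whereas Proposition \ref{Simplification} is indexed from $b=0$. Since the $b=0$ (resp.\ $c=0$) term of the shifted series is just the constant $1$, the two indexings agree up to a bounded additive constant which is absorbed into the leading-order identity asserted in the proposition. There is no genuine obstacle; the proof is a two-line application of Proposition \ref{Simplification} in each case.
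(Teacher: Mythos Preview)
Your proposal is correct and follows essentially the same approach as the paper: both reduce each sum to Proposition~\ref{Simplification} by absorbing the constant shift $-4$ (resp.\ $-6$ together with the index shift $b\mapsto b-1$) into the argument of the logarithm, and both note that the discrepancy between summing from $b=0$ and from $b=1$ contributes only a bounded additive constant which is discarded. The only difference is cosmetic notation (you write $x/16$ and $3x/64$ where the paper writes $2^{\log_2(x)-4}$ and $2^{\log_2(x)+\log_2(3)-6}$).
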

\begin{proof}
 
\begin{eqnarray*}
M_2(x) & = & \frac{3}{2}\sum_{b=1}^{\infty} 3^{-b} \binom{b\log_2(3)+\log_2(x)-4}{b}+1 \\
& = & \frac{3}{2}\sum_{b=0}^{\infty} 3^{-b} \binom{b\log_2(3)+\log_2(x)-4}{b}-\frac{1}{2} \\
& = & \frac{3}{2}\frac{1}{2-\log_2(3)}2^{\log_2(x)-4+1} -\frac{1}{2} \\
& = & \frac{3}{16}\frac{1}{2-\log_2(3)}2^{\log_2(x)}-\frac{1}{2} \\
& = & 0.45177 x-\frac{1}{2}
\end{eqnarray*}

\begin{eqnarray*}
M_3(x) & = & \frac{3}{2}\sum_{b=1}^{\infty} 3^{-b+1} \binom{b\log_2(3)+\log_2(x)-6}{b-1} \\
& = & \frac{3}{2}\sum_{b=1}^{\infty} 3^{-b+1} \binom{(b-1)\log_2(3)+\log_2(3)+\log_2(x)-6}{b-1} \\
& = & \frac{3}{2}\sum_{b=0}^{\infty} 3^{-b} \binom{b\log_2(3)+\log_2(3)+\log_2(x)-6}{b} \\
& = & \frac{3}{2}\frac{1}{2-\log_2(3)}2^{\log_2(x)+\log_2(3)-6+1} \\
& = & \frac{9}{64}\frac{1}{2-\log_2(3)}2^{\log_2(x)} \\
& = & 0.3388 x
\end{eqnarray*}
\end{proof}
$M(x)$ is a lower bound for the number of odd integers included in $g(1)$. Therefore  $2M(x)$ is a lower bound for the number of integers included in $g(1)$. The expression $M_2(x)$ contains the term $-\frac{1}{2}$ that is negligible, and we forget it in the following.

\section{Toy example with $x=2.10^{10}$}
\begin{table}[h]
\begin{tabular}{cc}
\includegraphics[width=7cm]{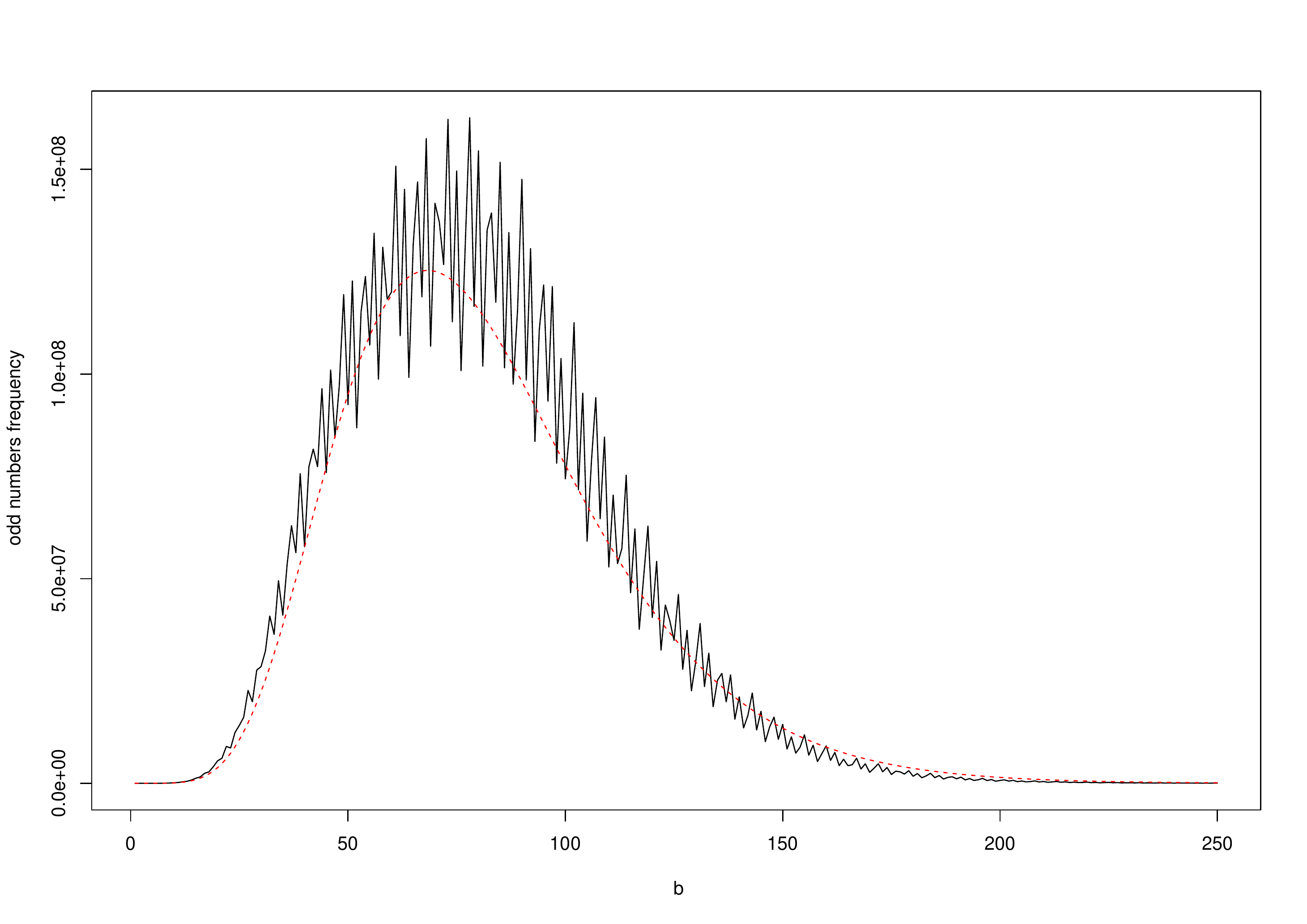} 
&
\includegraphics[width=7cm]{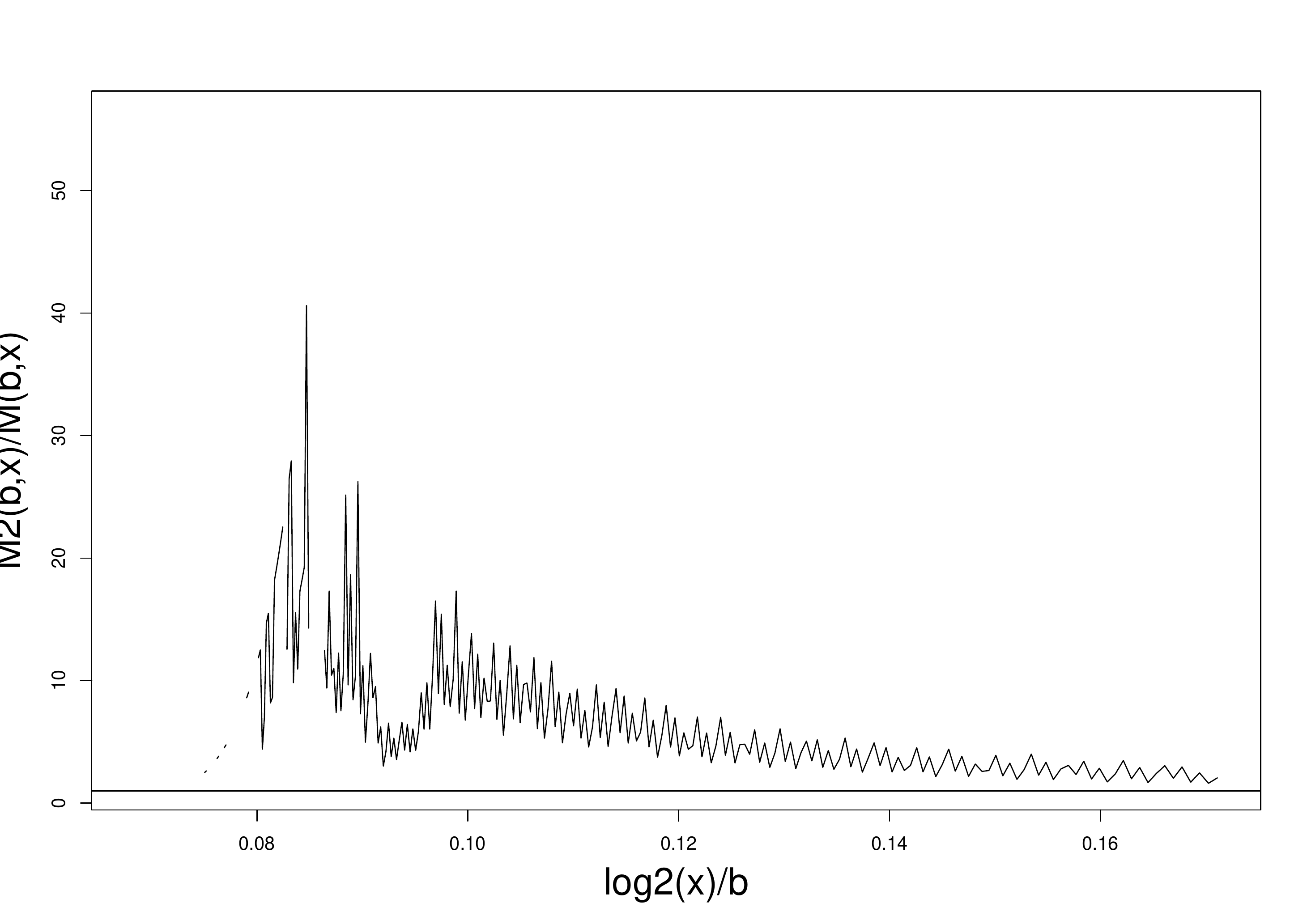} 
\end{tabular}
\caption{left : Number of odd integers less than $x=2.10^{10}$ obtained in $b$ steps ($b=1:300$) (black continuous line),  lowerbound $M_2$ (red dashed line) right: y-axis: $\frac{M_2(b,x)}{ M(b,x)}$, x-axis: $\frac{\log_2(x)}{b}$   ($b \in (200,500)$) }
\label{fig_x=2.10^{10}}
\end{table}

The values of $b$ for the $10^{10}$ odd numbers less than $x=2.10^{10}$ have been computed.  The left figure of table \ref{fig_x=2.10^{10}} shows the number of odd integers less than $x$ obtained in exactly $b$ steps, compared with $M_2(b,x)$. We have $M_2(b,x) < M(b,x)$ for $b<150$, but this is not true for $b>200$ (see the right figure of the same table), because the approximation $M_2(b,x)$ is not a lower bound for the extremal lower tail of the distribution of $a$.

However the overestimation of $M_2(b,x)$ for large $b$ is largely compensed by the underestimation of $M_2(b,x)$ for $b < 150$.  The figure \ref{fig_x=2.10^10Cum} shows that $\forall b \in \mathbb{N},\sum_{i=1}^b M_2(i,x) < \sum_{i=1}^b N(i,x)$.
 
 \begin{figure}[ht] 
 \begin{center}
\includegraphics[width=10cm]{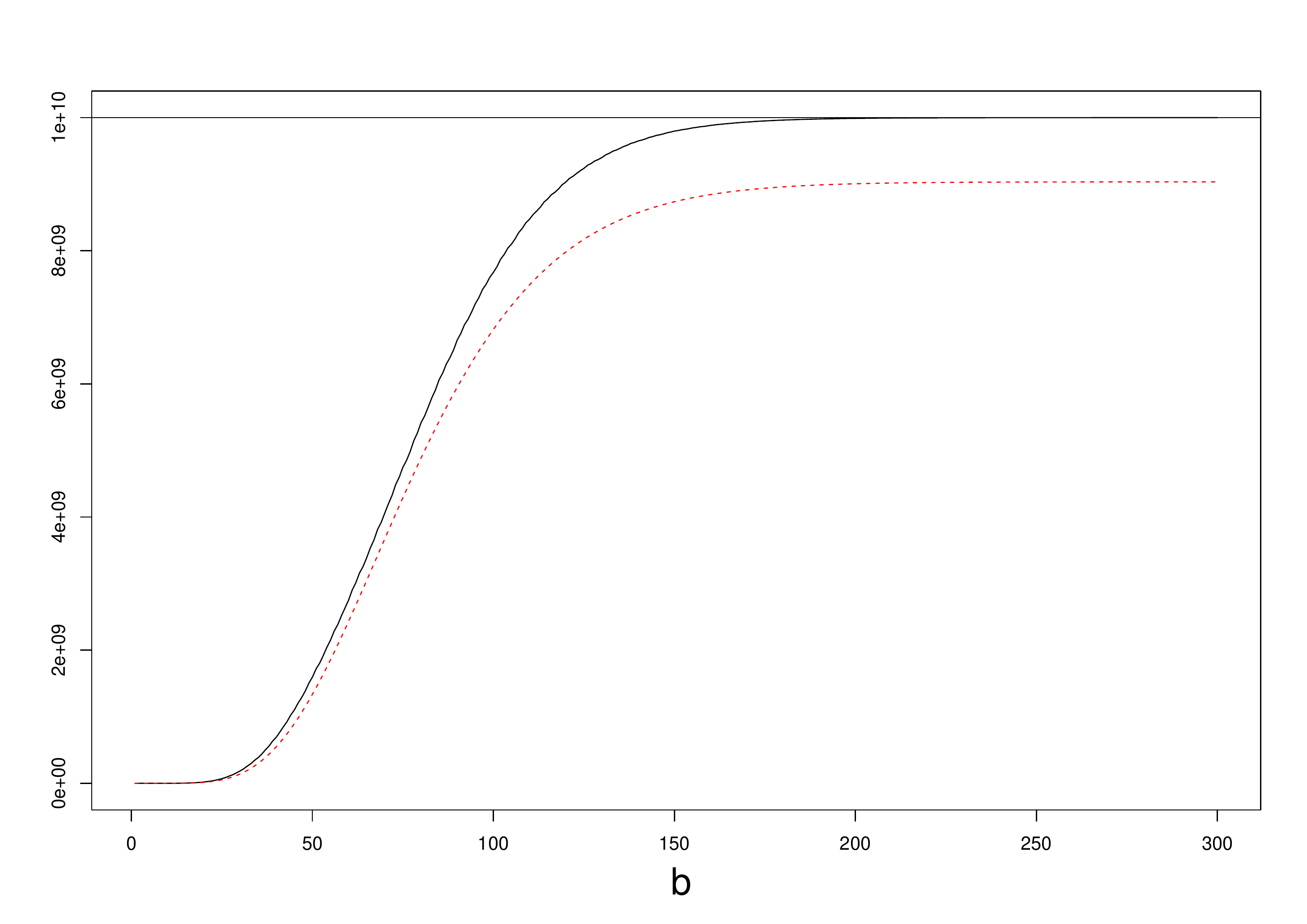} 
\caption{Number of odd integers less than $x=10^{10}$ obtained in less than $b$ steps (black continuous line)  lowerbound $M_2$ (red dashed line)}
\label{fig_x=2.10^10Cum} 
\end{center}
\end{figure}

\section{Properties of the distribution $P(B=b)= \frac{8(2-\log_2(3))}{x}\binom{a(x)-4}{b}3^{-b}$ for fixed $x$.}
For fixed $x$ let $B$ be an integer random variable defined by $P(B=b)= \frac{8(2-\log_2(3))}{x}\binom{a(x)-4}{b}3^{-b}$, the proportion $\frac{M_2(b,x)}{\sum_bM_2(b,x)}.$ There is no random process in the context of the Collatz problem. The probabilistic formalization is only a practical way to express the distribution of the values of $b$ for odd integers less than $x$. For instance, $\mathbb{E}(B)$ is an approximation of the mean value of $b$ among the odd integers less than $x$. The moments of $B$ can be expressed using the properties of the generalized binomial series $B_t(z)=\sum_{n=0}^{\infty}\binom{tn+1}{n}\frac{1}{tn+1}z^n$:
\begin{proposition}
$$\mathbb{E}_x(B)=\frac{log_2(x)}{2-log_2(3)}+\frac{5log_2(3)-8}{(2-log_2(3))^2}\simeq 2.409421 \log_2(x)-0.436487$$
\end{proposition}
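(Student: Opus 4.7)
The plan is to compute $\mathbb{E}_x(B)$ by differentiating the ordinary generating function already used in the proof of Proposition~\ref{Simplification}. Set $t=\log_2(3)$, $z=1/3$, $r=\log_2(x)-4$, and let $F(z)=\sum_{b\ge 0}\binom{tb+r}{b}z^b$. By the same Graham--Knuth--Patashnik identity invoked in Proposition~\ref{Simplification},
\begin{equation*}
F(z)=\frac{u^r}{g(u)},\qquad g(u):=1-t+\tfrac{t}{u},\qquad u:=B_t(z),
\end{equation*}
and at $z=1/3$ the paper has already established that $u=2$. Since $P(B=b)$ equals $C$ times the $b$-th summand of $F(1/3)$, with $C=\tfrac{8(2-\log_2 3)}{x}$ the normalising constant extracted from Proposition~\ref{Simplification}, the expectation is $\mathbb{E}_x(B)=C\cdot\bigl(zF'(z)\bigr)\bigl|_{z=1/3}$.

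Next I would differentiate. Implicit differentiation of $u=1+zu^t$ (the recast of eq.~5.59) gives $u'(z)=u^t/g(u)$, so that at $z=1/3$ we have $u'=6/(2-\log_2 3)$. Using $g'(u)=-t/u^2$, the chain rule applied to $F=u^r/g(u)$ yields
\begin{equation*}
F'(z)=\frac{u'}{g(u)^2}\bigl[r u^{r-1}g(u)-u^r g'(u)\bigr]=\frac{u'}{g(u)^2}\,u^{r-2}\bigl[r(1-t)u+(r+1)t\bigr].
\end{equation*}
At $u=2$ the bracket collapses to $r(2-\log_2 3)+\log_2 3$, and this is essentially the only nontrivial algebraic manipulation in the proof.

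Finally I would substitute $u=2$, $u^r=2^{\log_2 x-4}=x/16$, $g(2)=(2-\log_2 3)/2$, $u'(1/3)=6/(2-\log_2 3)$, and multiply by $z=1/3$ and by $C=8(2-\log_2 3)/x$. The powers of $2$, the factor of $x$, and two of the three powers of $(2-\log_2 3)$ all cancel, leaving
\begin{equation*}
\mathbb{E}_x(B)=\frac{(\log_2 x-4)(2-\log_2 3)+\log_2 3}{(2-\log_2 3)^2}=\frac{\log_2 x}{2-\log_2 3}+\frac{5\log_2 3-8}{(2-\log_2 3)^2},
\end{equation*}
which is the claimed identity (numerically $2.4094\log_2 x-0.4365$). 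There is no conceptual obstacle; the only thing requiring care is the bookkeeping of the constants in the last cancellation. The same template---differentiating $F$ once more and reusing $u'=u^t/g(u)$---would produce a closed form for $\mathbb{E}_x(B(B-1))$ and hence for $\mathrm{Var}_x(B)$, in case a second moment is needed later in the paper.
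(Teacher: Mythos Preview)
Your proof is correct and follows essentially the same approach as the paper: differentiate the generating-function identity $\sum_{b\ge 0}\binom{tb+r}{b}z^b=\dfrac{B_t(z)^r}{1-t+t/B_t(z)}$, use the derivative of $B_t$ obtained from the functional equation $u=1+zu^t$, evaluate at $z=1/3$, and substitute $r=\log_2 x-4$. Your bookkeeping is slightly cleaner than the paper's---in particular your observation that $1-ztu^{t-1}=g(u)$, so that $u'=u^t/g(u)$, streamlines the algebra---but the method and the ingredients are identical.
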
 

\begin{proof}
 $$z\frac{\partial{ \left\{ \frac{B_t(z)^r}{1-t+\frac{t}{B_t(z)}} \right\}}}{\partial z}=\sum_{n=0}^{\infty}\binom{tn+r}{n} n z^n,$$
 and
 $$
  z \frac{\partial{ \left\{ \frac{B_t(z)^{r+1}}{(1-t)B_t(z)+t}\right\} }}{\partial {z}}  = 
  z \left\{ \frac{(r+1)B_t(z)^{'}B_t(z)^r}{(1-t)B_t(z)+t} - \frac{(1-t)B_t(z)^{'}B_t(z)^{r+1}}{((1-t)B_t(z)+t)^2}\right\}
 $$
Moreover
$$ B_t(z)-1=zB_t(z)^t \Rightarrow B_t(z)^{'}=\frac{B_t(z)^t}{1-ztB_t(z)^{t-1}}.$$
$z=1/3$ and $t=\log_2(3) \Rightarrow B_{t}(z)^{'}=\frac{6}{2-\log_2(3)}.$

Therefore
$$\sum_{n=0}^{\infty}\binom{\log_2(3)n+r}{n} n \left( \frac{1}{3}\right )^n =\frac{2^{r+1}}{2-log_2(3)}\left( \frac{r+1}{2-log_2(3)}+\frac{2(log_2(3)-1)}{(2-log_2(3))^2} \right),$$
$r=\log_2(x)-4 \Rightarrow \mathbb{E}_x(B)=\frac{\log_2(x)}{2-log_2(3)}+\frac{5log_2(3)-8}{(2-log_2(3))^2}$

\end{proof}

\begin{proposition}\label{prop-variance}
$$\mathbb{V}_x(B)=\log_2(x)\frac{2}{(2-log_2(3))^3}+\frac{2\log_2(3)^2+8\log_2(3)-16}{(2-log_2(3))^4}\simeq 27.9749 \log_2(x)+57.4246$$
\end{proposition}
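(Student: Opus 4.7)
The plan is to parallel the derivation of $\mathbb{E}_x(B)$ in the previous proposition, applying the differential machinery to a second order. Write
\[F(z):=\frac{B_t(z)^{r+1}}{(1-t)B_t(z)+t}=\sum_{n=0}^{\infty}\binom{tn+r}{n}z^n,\]
use the identity $\mathbb{V}_x(B)=\mathbb{E}_x[B(B-1)]+\mathbb{E}_x(B)-\mathbb{E}_x(B)^2$, and observe that $\sum_{n}n(n-1)\binom{tn+r}{n}z^n=z^2F''(z)$. The factorial-moment piece then amounts to evaluating $z^2F''(z)$ at $z=1/3$, $t=\log_2(3)$ and $r=\log_2(x)-4$, and multiplying by the normalising constant $8(2-\log_2(3))/x$.

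For this I need $B_t''(1/3)$ in addition to the already known values $B_t(1/3)=2$ and $B_t'(1/3)=6/(2-\log_2(3))$. Differentiating the defining identity $B_t(z)-1=zB_t(z)^t$ twice yields a linear equation for $B_t''(z)$ whose coefficient is the same factor $1-ztB_t(z)^{t-1}$ that was evaluated in the expectation proof; substituting $z=1/3$ gives $(2-\log_2(3))/2$ on the left and an explicit right-hand side involving only $B_t(1/3)$ and $B_t'(1/3)$, so one reads off $B_t''(1/3)$ as a rational combination of powers of $2-\log_2(3)$.

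Next I apply the chain and quotient rules to $F$ to express $F''$ as a linear combination (in $r$) of the terms $(B_t')^2 B_t^{r-1}$ and $B_t'' B_t^{r}$, divided by powers of the denominator $(1-t)B_t+t$, and evaluate at $z=1/3$. This produces $\mathbb{E}_x[B(B-1)]$ as a quadratic polynomial in $\log_2(x)$. Adding $\mathbb{E}_x(B)$ from the previous proposition and subtracting $\mathbb{E}_x(B)^2$ yields $\mathbb{V}_x(B)$.

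The only real obstacle is the algebraic verification: the $\log_2(x)^2$ contributions from $\mathbb{E}_x[B(B-1)]$ and from $\mathbb{E}_x(B)^2$ both have leading coefficient $1/(2-\log_2(3))^2$ and must cancel exactly, and the residual linear and constant pieces must reassemble into the announced coefficients $2/(2-\log_2(3))^3$ and $(2\log_2(3)^2+8\log_2(3)-16)/(2-\log_2(3))^4$. This is pure bookkeeping and uses no idea beyond those already deployed for the expectation.
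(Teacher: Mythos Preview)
Your plan is correct and coincides with the paper's own proof: the paper also computes $\mathbb{E}_x[B(B-1)]$ via $z^2F''(z)$ at $z=1/3$, obtains $B_t''(1/3)=18\log_2(3)(3-\log_2(3))/(2-\log_2(3))^3$, expands $F''$ by the quotient and chain rules, and finishes with $\mathbb{V}_x(B)=\mathbb{E}_x[B(B-1)]+\mathbb{E}_x(B)-\mathbb{E}_x(B)^2$. The only cosmetic difference is that the paper gets $B_t''$ by differentiating the explicit expression $B_t'=B_t^t/(1-ztB_t^{t-1})$ rather than differentiating the implicit relation $B_t-1=zB_t^t$ a second time as you suggest; the two routes are equivalent.
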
 

\begin{proof}
see annex
\end{proof}
Higher moments can be computed by the same method. Moreover the pdf of $B$ tends to normality when $x$ tends to $\infty$.

\begin{proposition}\label{loiNormale}
$$\lim_{x\rightarrow\infty} \mathcal{L}\left( \frac{B-\mathbb{E}_x(B)}{\mathbb{V}_x(B)^{\frac{1}{2}}} \right)=\mathcal{N}(0,1)$$
\end{proposition}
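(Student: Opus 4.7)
The plan is to identify the probability generating function of $B$ as a quasi-power of the form $A(s)\,H(s)^{\log_{2}(x)}$ and to deduce the central limit theorem via the method of cumulants. Starting from the identity (\cite{Graham}, eq.~5.61) already exploited in Proposition~\ref{Simplification}, namely $\sum_{b\geq 0}\binom{bt+r}{b}z^{b}=B_t(z)^{r}/(1-t+t/B_t(z))$ with $t=\log_{2}3$, $z=s/3$ and $r=\log_{2}(x)-4$, and using $B_t(1/3)=2$, I obtain
\[
G_{x}(s):=\mathbb{E}_{x}(s^{B})=A(s)\,H(s)^{\log_{2}(x)},\qquad H(s)=\tfrac{1}{2}B_{t}(s/3),\quad A(s)=\frac{8(2-t)}{B_{t}(s/3)^{4}\bigl[1-t+t/B_{t}(s/3)\bigr]},
\]
with $A(1)=H(1)=1$. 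A crucial feature is that $A$ is independent of $x$.

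The next step is to establish that $A$ and $H$ are analytic in a complex neighbourhood of $s=1$ whose radius does not depend on $x$. Differentiating the defining relation $y-1-zy^{t}=0$ for $y=B_t(z)$ at $(z,y)=(1/3,2)$ gives $1-tzy^{t-1}=(2-\log_{2}3)/2\neq 0$, so the implicit function theorem yields analyticity of $B_{t}$ on a disc around $z=1/3$. Consequently $\alpha(\tau):=\log A(e^{\tau})$ and $\ell(\tau):=\log H(e^{\tau})$ are analytic on a fixed neighbourhood of $\tau=0$.

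The cumulant generating function then factors as $K_{x}(\tau)=\log G_{x}(e^{\tau})=\alpha(\tau)+\log_{2}(x)\,\ell(\tau)$, so the $k$-th cumulant of $B$ equals $\kappa_{k}(B)=\alpha^{(k)}(0)+\log_{2}(x)\,\ell^{(k)}(0)$. For $k=1,2$ this reproduces $\mathbb{E}_{x}(B)$ and $\mathbb{V}_{x}(B)$ of the preceding two propositions and, in particular, confirms $\ell''(0)=2/(2-\log_{2}3)^{3}>0$. For $k\geq 3$ one has $\kappa_{k}(B)=O(\log_{2}x)$, hence
\[
\frac{\kappa_{k}(B)}{\mathbb{V}_{x}(B)^{k/2}}=O\bigl((\log_{2}x)^{1-k/2}\bigr)\xrightarrow[x\to\infty]{}0.
\]
A Taylor expansion of $K_{x}\bigl(i\xi/\sqrt{\mathbb{V}_{x}(B)}\bigr)$ inside the fixed disc of analyticity, combined with Lévy's continuity theorem, converts this into convergence of the standardised characteristic function to $e^{-\xi^{2}/2}$ and yields the stated normal convergence.

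The only non-routine point is the analyticity of $B_t$ around $z=1/3$, which is settled by the implicit function theorem above; I note that the relevant Jacobian is precisely $(2-\log_{2}3)/2$, the same quantity that pervades the rest of the paper. Once this is in hand the quasi-power / cumulant machinery is standard, and all remaining steps are direct consequences of the generalised binomial series identities already used throughout. A minor auxiliary remark is that $B$ is supported on a range of size $O(\log_{2}x)$, so the pgf is in fact a polynomial in $s$ and the identities above are valid with no convergence issues.
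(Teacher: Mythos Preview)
Your argument is correct and, at its core, follows the same path as the paper: both start from the closed form for the probability generating function furnished by the generalised binomial series identity, and both analyse its behaviour as $\log_2 x\to\infty$ via a local expansion of $B_t$ near $z=1/3$. The difference is one of packaging. The paper carries out an explicit hands-on computation: it writes the moment generating function of the standardised variable, Taylor-expands $B_t(e^{A\sqrt{y}}/3)$ to second order in $\sqrt{y}$ (with $y=1/l$), and shows directly that $\log\phi_x(u)\to u^2/2$. You instead recognise the quasi-power structure $G_x(s)=A(s)H(s)^{\log_2 x}$ and invoke the cumulant scheme, which is the standard analytic-combinatorics route (Hwang / Flajolet--Sedgewick) and makes the argument considerably shorter and more transparent. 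Your use of the implicit function theorem to secure a fixed disc of analyticity is exactly what underlies the paper's Taylor expansion, but the paper leaves it implicit; making it explicit is a genuine improvement.

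One small correction: your closing ``auxiliary remark'' that $B$ has support of size $O(\log_2 x)$, so that the pgf is a polynomial, is false. Since $t=\log_2 3>1$ and $r=\log_2 x-4>0$ (for $x>16$), one has $bt+r-j>0$ for $0\le j\le b-1$, hence $\binom{bt+r}{b}>0$ for every $b\ge 0$; the support of $B$ is all of $\mathbb{N}$ and the pgf is a genuine power series. This does not damage your proof, because convergence and analyticity are already guaranteed by your implicit-function argument, but the remark should be dropped.
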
 

\begin{proof}
see annex
\end{proof}

The proposition \ref{loiNormale} may be used to cut the last terms of $M_2(x).$ Actually the proposed minoration $M_2(b,x)$ of $M(b,x)$ prove defective for high values of $b$ such that $\frac{\log_2(x)}{b} < 0.25 \Leftrightarrow b > 4\log_2(x)$. For large $x$:
$$M'_2(x)=\frac{3}{2}\sum_{b=\mathbb{E}_x(B)-2\sqrt{\mathbb{V}_x(B)}}^{\mathbb{E}_x(B)+2\sqrt{\mathbb{V}_x(B)}} 3^{-b} \binom{b\log_2(3)+\log_2(x)-4}{b} \simeq 0.954M_2(x)=\frac{3}{16}\frac{0.954}{2-\log_2(3)}x.$$

Moreover for large $x$,
\begin{eqnarray*}
\mathbb{E}_x\left(\frac{B}{\log_2(x)}\right)+2\sqrt{\mathbb{V}_x\left(\frac{B}{\log_2(x)}\right)} & = & 2.41- \frac{0.44}{\log_2(x)}+2\sqrt{\frac{27.9749}{ \log_2(x)}+\frac{57.4246}{(\log_2(x))^2}} \\
& < & 4 
\end{eqnarray*}
The concentration of the pdf of $\frac{B}{\log_2(x)}$ around $\mathbb{E}_x\left(\frac{B}{\log_2(x)}\right)$ implies that the problem of the defective large values of $b$ in $M_2(x)$ vanishes when $x \rightarrow \infty$. The same argument applies to the low values of $b<\log(\log_2(x))+1$ for which the condition C1 is not valid.

\bibliographystyle{plain} 
\bibliography{biblio}

\begin{thebibliography}{10}

\bibitem{Applegate}
David Applegate and Jeffrey~C. Lagarias.
\newblock {Density bounds for the $ 3x+1$ problem.}
\newblock {\em Math. Comp.}, (64):427--438, 1995.

\bibitem{Colussi}
Livio Colussi.
\newblock {The convergence classes of Collatz function}.
\newblock {\em Theoretical Computer Science}, 412(39):5409--5419, 2011.

\bibitem{Daudin}
J-J. Daudin and L.~Pierre.
\newblock {A simple proof of the Wirsching-Goodwin representation of integers
  connected to 1 in the 3x+1 problem}.
\newblock {\em arXiv:1507.07968v2 [math.NT]}, 2016.

\bibitem{Eger}
Steffen Eger.
\newblock {Restricted Weighted Integer Compositions and Extended Binomial
  Coefficients}.
\newblock {\em Journal of Integer Sequences}, 16, 2013.

\bibitem{Goodwin}
J.~R. Goodwin.
\newblock {The 3x+1 Problem and Integer Representations}.
\newblock {\em arXiv:1504.03040 [math.NT]}, 2015.

\bibitem{Graham}
R.~L. Graham, D.~E. Knuth, and O.~Patashnik.
\newblock {\em {Concrete mathematics.}}
\newblock Addison-Wesley Publishing Company, 1994.

\bibitem{Krasikov}
I.~Krasikov.
\newblock {How many numbers satisfy the 3x+1 conjecture ?}
\newblock {\em Internat. J. Math. Math. Sci.}, 12:791--796, 1989.

\bibitem{Krasikov2}
Ilia Krasikov and Jeffrey~C. Lagarias.
\newblock {Bounds for the 3x+1 Problem using Difference Inequalities}.
\newblock {\em arXiv:math/0205002v1 [math.NT]}, 2002.

\bibitem{Lagarias}
J.C. Lagarias.
\newblock {\em {The Ultimate Challenge: The 3x+1 Problem}}.
\newblock American Mathematical Soc., 2010.

\bibitem{Neuschel}
T.~Neuschel.
\newblock {A Note on Extended Binomial Coefficients}.
\newblock {\em arXiv:1407.7429 [math.CO]}, 2014.

\end{thebibliography}

\appendix
\section{Proof of proposition \ref{prop-variance}.}
 $$z^2\frac{\partial^2{ \left\{ \frac{B_t(z)^r}{1-t+\frac{t}{B_t(z)}} \right\}}}{\partial^2 z}=\sum_{n=0}^{\infty}\binom{tn+r}{n} n(n-1) z^n,$$
 and
 $$
   \frac{\partial^2{  \frac{B_t(z)^{r+1}}{(1-t)B_t(z)+t} }}{\partial^2 {z}}  = 
   \frac{\partial{\left\{ \frac{(r+1)B_t(z)^{'}B_t(z)^r}{(1-t)B_t(z)+t} - \frac{(1-t)B_t(z)^{'}B_t(z)^{r+1}}{((1-t)B_t(z)+t)^2}\right\}}}{\partial z}
 $$
 \begin{eqnarray*}
 \frac{\partial{ \frac{(r+1)B_t(z)^{'}B_t(z)^r}{(1-t)B_t(z)+t}}}{\partial z} & = & (r+1)\left[ \frac{rB_t(z)^{'2}B_t(z)^{r-1}+B_t(z)^{''}B_t(z)^r}{(1-t)B_t(z)+t}-\frac{B_t(z)^{'2}B_t(z)^r(1-t)}{((1-t)B_t(z)+t)^2}\right]
 \end{eqnarray*}
 \begin{eqnarray*}
 \frac{\partial{ \frac{(1-t)B_t(z)^{'}B_t(z)^{r+1}}{((1-t)B_t(z)+t)^2}}}{\partial z} & = & (1-t)\left[ \frac{B_t(z)^{''}B_t(z)^{r+1}+(r+1)B_t(z)^{'2}B_t(z)^{r}}{((1-t)B_t(z)+t)^2}\right] \\
 & - & 2(1-t)\frac{(1-t)B_t(z)^{'2}B_t(z)^{r+1}}{((1-t)B_t(z)+t)^3}
 \end{eqnarray*}
 
Moreover, with $c=(2-t),$
\begin{eqnarray*}
B_t(z)''& = & \left( \frac{B_t(z)^t}{1-ztB_t(z)^{t-1}} \right) '\\
 & = & \frac{tB_t(z)^{t-1}B_t(z)'}{1-ztB_t(z)^{t-1}}+B_t(z)^t\frac{tB_t(z)^{t-1}+zt(t-1)B_t(z)^{t-2}B_t(z)'}{((1-ztB_t(z)^{t-1})^2}\\
  & = & \frac{t2^{t-1}6/c}{1-(1/3)t2^{t-1}}+2^t\frac{t2^{t-1}+(1/3)t(t-1)2^{t-2}6/c}{((1-(1/3)t2^{t-1})^2}\\
   & = & \frac{t\frac{3}{2}6/c}{1-(1/3)t\frac{3}{2}}+3\frac{t\frac{3}{2}+(1/3)t(t-1)\frac{3}{4}6/c}{((1-(1/3)t\frac{3}{2})^2}\\
    & = & \frac{18t}{c^2}+3\frac{6tc+6t(t-1)}{c^3} \\
     & = & \frac{18t}{c^2}\left(1+\frac{c+(t-1)}{c}\right) \\
      & = & \frac{18t}{c^3}\left(2c+(t-1)\right) \\
       & = & \frac{18t}{c^3}(3-t) \\
        & = & \frac{18\log_2(3)(3-\log_2(3))}{(2-\log_2(3))^3} \\
\end{eqnarray*}

With $t=\log_23,z=\frac{1}{3} \Rightarrow  B_t(z)=2, B_t(z)'=\frac{6}{c}$ and $B_t(z)''=d$ we obtain
\begin{eqnarray*}
 \frac{\partial{ \frac{(r+1)B_t(z)^{'}B_t(z)^r}{(1-t)B_t(z)+t}}}{\partial z} & = &  (r+1) \left[\frac{r(\frac{6}{c})^22^{r-1}+d2^r}{c}-\frac{(\frac{6}{c})^22^r(1-t)}{c^2}\right] \\
  & = & \frac{(r+1)2^r}{c^4} \left( 18rc+dc^3-36(1-t)c \right)\\
  & = & \frac{(r+1)2^r}{c^4} \left( 18rc+18t(3-t)-36(1-t) \right) \\
   & = & \frac{18(r+1)2^r}{c^4} \left( r(2-t)+t(3-t)-2(1-t) \right) \\
    & = & \frac{18(r+1)2^r}{c^4} \left( rc+5t-t^2-2 \right) \\
     & = & 9\frac{ 2^{r+1}}{c^5} \left((r+1) (rc^2+(5t-t^2-2)c) \right) 
 \end{eqnarray*}
 and
 \begin{eqnarray*}
 \frac{\partial{ \frac{(1-t)B_t(z)^{'}B_t(z)^{r+1}}{((1-t)B_t(z)+t)^2}} }{\partial z} & = & (1-t) \left[ \frac{d2^{r+1}+(r+1)(1-t)(\frac{6}{c})^22^r}{((1-t)2+t)^2} - 2\frac{(1-t)(\frac{6}{c})^22^{r+1}}{((1-t)2+t)^3}\right] \\
& = &  \frac{(1-t)2^{r+1}}{c^5}\left( dc^3+18(r+1)c - 72(1-t) \right) \\
& = &  \frac{(1-t)2^{r+1}}{c^5}\left((18t(3-t)+18(r+1)c - 72(1-t) \right) \\
& = &  18\frac{(1-t)2^{r+1}}{c^5}\left(t(3-t)+(r+1)c - 4(1-t) \right) \\
& = &  18\frac{(1-t)2^{r+1}}{c^5}\left((r+1)c -t^2+7t-4 \right) \\
& = &  9\frac{2^{r+1}}{c^5}\left(2(1-t)((r+1)c -t^2+7t-4) \right)
 \end{eqnarray*}
 and
 \begin{eqnarray*}
 z^2\frac{\partial^2{ \left\{ \frac{B_t(z)^r}{1-t+\frac{t}{B_t(z)}} \right\}}}{\partial^2 z} &=& \frac{2^{r+1}}{c^5}\left( (r+1) (rc^2+(5t-t^2-2)c)-2(1-t)((r+1)c -t^2+7t-4) \right) \\
 &=& \frac{2^{r+1}}{c^5}\left( r(r+1)c^2 + c(r+1)(5t-t^2-2-2(1-t))-2(1-t)(-t^2+7t-4) \right) \\
  &=& \frac{2^{r+1}}{c^5}\left( r(r+1)c^2+c(r+1)(-t^2+7t-4)+2(t-1)(-t^2+7t-4) \right)
 \end{eqnarray*}

Therefore
$$\sum_{n=0}^{\infty}\binom{\log_2(3)n+r}{n} n(n-1) \left( \frac{1}{3}\right )^n =\frac{2^{r+1}}{c^5}\left(  r(r+1)c^2+(r+1)c(-t^2+7t-4)+2(t-1)(-t^2+7t-4)\right),$$
$r=\log_2(x)-4 \Rightarrow \mathbb{E}(B(B-1))=\frac{1}{c^4}\left( (l-4)(l-3))c^2+(l-3)c(-t^2+7t-4) +2(t-1)(-t^2+7t-4) \right)$
and with $l=\log_2x$,
\begin{eqnarray*}
\mathbb{V}(B)&=&\mathbb{E}(B(B-1)+\mathbb{E}(B)-\mathbb{E}(B)^2\\
&=&\frac{1}{c^4}\left( (l-4)(l-3)c^2+(l-3)c(-t^2+7t-4) +2(t-1)(-t^2+7t-4) \right)\\
&+&\frac{l}{c}+\frac{5t-8}{c^2}-(\frac{l}{c}+\frac{5t-8}{c^2})^2\\
&=&\frac{l(4-2t)+2t^2+8t-16}{c^4} 
\end{eqnarray*}

\section{Proof of proposition \ref{loiNormale}.}
Let $g_{B_x}(s)$ the generating function of $B.$ With $t=\log_23,$ $l=\log_2x-4$ and $c=2-t$, 
\begin{eqnarray*}
g_{B_x}(s) & = & \sum_{b=0}^{\infty}s^bP_x(B=b)\\
& = & \sum_{b=0}^{\infty}s^b\frac{8(2-\log_2(3))}{x}\binom{a(x)-4}{b}3^{-b}\\
& = & \frac{8(2-t)}{x}\sum_{b=0}^{\infty}\left(\frac{s}{3}\right)^b\binom{bt+\log_2x-4}{b}\\
& = & \frac{2-t}{2}2^{4-\log2x}\sum_{b=0}^{\infty}\left(\frac{s}{3}\right)^b\binom{bt+\log_2x-4}{b}\\
& = & \frac{c}{2}2^{-l}\sum_{b=0}^{\infty}\left(\frac{s}{3}\right)^b\binom{bt+l}{b}\\
& = & c2^{-(l+1)}\frac{[B_t(\frac{z}{3})]^{l+1}}{(1-t)B_t(\frac{z}{3})+t}
\end{eqnarray*}
$x\rightarrow \infty \Rightarrow \frac{\mathbb{E}(B)}{\mathbb{V}(B)^{\frac{1}{2}}} = \sqrt{\frac{lc}{2}}+o(l^{\frac{1}{2}}),$ and $\frac{1}{\mathbb{V}(B)^{\frac{1}{2}}} = c\sqrt{\frac{c}{2l}}+o(l^{-\frac{1}{2}}).$ 

Therefore the generating function of $ \frac{B-\mathbb{E}(B)}{\mathbb{V}(B)^{\frac{1}{2}}}$ is $h_x(s)=s^{-\sqrt{\frac{lc}{2}}}g_{B_x}(s^{c\sqrt{\frac{c}{2l}}}).$
\begin{eqnarray*}
h_x(s) & = & s^{-\sqrt{\frac{lc}{2}}}\frac{c}{2}2^{-l}\frac{[B_t(\frac{s^{c\sqrt{\frac{c}{2l}}}}{3})]^{l+1}}{(1-t)B_t(\frac{s^{c\sqrt{\frac{c}{2l}}}}{3})+t}
\end{eqnarray*}
Let $\phi_x(u)=h_x(e^u)$ with $y=l^{-1},$ one obtains
\begin{eqnarray*}
\phi_x(u) & = & e^{-u\sqrt{\frac{lc}{2}}}c2^{-l-1}\frac{[B_t(\frac{e^{uc\sqrt{\frac{c}{2l}}}}{3})]^{l+1}}{(1-t)B_t(\frac{e^{uc\sqrt{\frac{c}{2l}}}}{3})+t}\\
\log(\phi_x(u)) & = & -u\sqrt{\frac{lc}{2}}+(l+1)\left[\log[B_t(\frac{e^{uc\sqrt{\frac{c}{2l}}}}{3})-\log2\right]-\left[\log((1-t)B_t(\frac{e^{uc\sqrt{\frac{c}{2l}}}}{3})+t)-\log(c)\right]\\
\log(\phi_x(u)) & = & -u\sqrt{\frac{c}{2y}}+(\frac{1}{y}+1)\left[\log[B_t(\frac{e^{uc\sqrt{\frac{cy}{2}}}}{3})-\log2\right]-\left[\log((1-t)B_t(\frac{e^{uc\sqrt{\frac{cy}{2}}}}{3})+t)-\log(c)\right]
\end{eqnarray*}
Let $A=u\sqrt{\frac{c^3}{2}},$ $e^{A\sqrt{y}}=1+A\sqrt{y}+\frac{1}{2}A^2y+o(y)$, 

and $B_t(z/3)=B_t(1/3)+(z-1)B'_t(1/3)+\frac{1}{2}(z-1)^2B''_t(1/3)+o(z^2)$ the Taylor expansion of $B_t(z)$ in the neighborhood of one. 
\begin{eqnarray*}
B_t\left( \frac{e^{A\sqrt{y}}}{3} \right) & = & B_t(\frac{1}{3})+A\sqrt{y}+\frac{1}{2}A^2yB'_t(1/3)+\frac{1}{2}\left(A\sqrt{y}+\frac{1}{2}A^2y\right)^2B''_t(1/3)+o(y)\\
& = & 2+2\frac{A}{c}\sqrt{y}+\frac{A^2}{c^3}y(4-t)+o(y)\\
\frac{B_t\left( \frac{e^{A\sqrt{y}}}{3} \right)}{B_t(\frac{1}{3})}& = & 1+\frac{A}{c}\sqrt{y}+\frac{A^2}{2c^3}y(4-t)+o(y)\\
\log \left[\frac{B_t\left( \frac{e^{A\sqrt{y}}}{3} \right)}{B_t(\frac{1}{3})}\right] & = & \frac{A}{c}\sqrt{y}+\frac{A^2}{2c^3}y(4-t)-\frac{1}{2}\left( \frac{A}{c}\sqrt{y} \right)^2+o(y)\\
& = & \frac{A}{c}\sqrt{y}+\frac{A^2}{c^3}y+o(y)\\
(1+\frac{1}{y})\log \left[\frac{B_t\left( \frac{e^{A\sqrt{y}}}{3} \right)}{B_t(\frac{1}{3})}\right] & = &
\frac{A}{c}y^{-\frac{1}{2}}+\frac{A^2}{c^3}+o(1)\\
 & = &
u\sqrt{\frac{c}{2}}y^{-\frac{1}{2}}+\frac{u^2}{2}+o(1)\\
\log(\phi_x(u)) & = & -u\sqrt{\frac{c}{2y}}+u\sqrt{\frac{c}{2}}y^{-\frac{1}{2}}+\frac{u^2}{2}+o(1)\\
& = & \frac{u^2}{2}+o(1)
\end{eqnarray*}
Note that $\log((1-t)B_t(\frac{e^{uc\sqrt{\frac{cy}{2}}}}{3})+t)-\log(c)=O(y^{\frac{1}{2}})$ and does not contribute to the limit of $\log(\phi_x(u)).$

Therefore  $\lim_{x\rightarrow \infty} \phi_x(u)=e^{\frac{u^2}{2}}$ , the gaussian moment generating function.
\end{document}